\theoremstyle{plain}
\newtheorem{theorem}{Theorem}[section]
\newtheorem{lemma}{Lemma}[section]
\newtheorem{corollary}{Corollary}[section]
\theoremstyle{remark}
\newtheorem{example}{Example}[section]
\newcommand{\R}{{\mathbb{R}}}
\newcommand{\Aff}{\mathrm{aff}\,}
\newcommand{\Bd}{\mathrm{bd}\,}
\newcommand{\Cl}{\mathrm{cl}\,}
\newcommand{\Dim}{\mathrm{dim\,}}
\newcommand{\Int}{\mathrm{int\,}}
\newcommand{\Lin}{\mathrm{lin\,}}
\newcommand{\Pos}{\mathrm{pos\,}}
\newcommand{\Rbd}{\mathrm{rbd\,}}
\newcommand{\Rint}{\mathrm{rint\,}}
\newcommand{\Span}{\mathrm{span\,}}
\newcommand{\fE}{\text{\textup{\textit{\tiny{E}}}}}
\newcommand{\fF}{\text{\textup{\textit{\tiny{F}}}}}
\newcommand{\fL}{\text{\textup{\textit{\tiny{L}}}}}
\newcommand{\fV}{\text{\textup{\textit{\tiny{V}}}}}
\newcommand{\fW}{\text{\textup{\textit{\tiny{W}}}}}
\begin{document}

\begin{center} {\Large \textbf{Moreau-Type Characterizations of Polar Cones}}

\bigskip

\textbf{Valeriu~Soltan}

Department of Mathematical Sciences, George Mason University

4400 University Drive, Fairfax, VA 22030, USA

vsoltan@gmu.edu

\end{center}

\begin{abstract}
A theorem of Moreau (1962) states that given a closed convex cone $C$ and its (negative) polar cone $C^\circ$ in a real Hilbert space $H$, vectors $y \in C$ and $z \in C^\circ$ are metric projections of a vector $u \in H$ on $C$ and $C^\circ$, respectively, if and only if they satisfy the following conditions: $y$ and $z$ are orthogonal and $u = y + z$. We show that these  conditions provide characteristic properties of polar cones $C$ and $C^\circ$ in the family of pairs of convex subsets of $H$ or $\R^n$. A related result on separation of $C$ a face of $C^\circ$ in $\R^n$ is proved.

\medskip

\noindent \textit{AMS Subject Classification:} 90C25, 52A20, 15A39

\medskip

\noindent \textit{Keywords:} Moreau, cone, decomposition, orthogonal, polar, projection.
\end{abstract}

\section{Introduction and Main Results} \label{1-intro}

A well known assertion of linear analysis states that given a closed subspace $S$ of a real Hilbert space $H$, every vector $u \in H$ is uniquely expressible as the sum $u = y + z$, where $y$ and $z$ are, respectively, the orthogonal projections of $u$ on $S$ and its orthogonal complement $S^\bot$. Moreau~\cite{mor62} obtained the following far-reaching generalization of this assertion in terms of metric projections on a closed convex cone $C \subseteq H$ and its polar cone $C^\circ$ (see Alber~\cite{alb00} for a further generalization of Moreau's theorem to the case of cones in Banach spaces).

\begin{theorem} \label{moreau}  \textnormal{(Moreau)}
Let\, $C \subseteq H$ be a closed convex cone, and let\, $u \in H$. For vectors\, $y \in C$ and\, $z \in C^\circ$, the following conditions are equivalent.
\begin{enumerate}

\item[$(a)$] $y$ and $z$ are, respectively, the metric projections of\, $u$ on $C$ and\, $C^\circ$.

\item[$(b)$] $y$ and $z$ are orthogonal and satisfy the equality $u = y + z$.

\end{enumerate}
\end{theorem}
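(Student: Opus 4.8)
The plan is to reduce everything to the standard variational characterization of the metric projection onto a closed convex set in a Hilbert space: a point $y \in C$ equals the metric projection $P_C(u)$ of $u$ on $C$ if and only if $\langle u - y, x - y\rangle \le 0$ for every $x \in C$. (Existence and uniqueness of $P_C(u)$ for closed convex $C$ is classical and may be taken for granted.) The first substantive step is to specialize this inequality to a cone. Since $0 \in C$ and $2y \in C$ whenever $y \in C$, inserting $x = 0$ and $x = 2y$ yields $\langle u - y, y\rangle \ge 0$ and $\langle u - y, y\rangle \le 0$, hence $\langle u - y, y\rangle = 0$; the general inequality then collapses to $\langle u - y, x\rangle \le 0$ for all $x \in C$, that is, $u - y \in C^\circ$. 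Conversely, these two conditions---namely $\langle u-y,y\rangle = 0$ and $u - y \in C^\circ$---immediately imply the variational inequality, so for a cone one obtains the clean equivalence: $y = P_C(u)$ if and only if $y \in C$, $u - y \in C^\circ$, and $\langle u - y, y\rangle = 0$.

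With this tool in hand, both implications follow by exploiting the symmetry between $C$ and $C^\circ$. For $(b) \Rightarrow (a)$, assume $u = y + z$ with $y \in C$, $z \in C^\circ$, and $\langle y, z\rangle = 0$. Then $u - y = z \in C^\circ$ and $\langle u - y, y\rangle = \langle z, y\rangle = 0$, so the cone characterization gives $y = P_C(u)$. Interchanging the roles of $C$ and $C^\circ$, I check the analogous three conditions for $z$: one has $z \in C^\circ$, $u - z = y$, and $\langle u - z, z\rangle = \langle y, z\rangle = 0$; the only point needing care is that $u - z = y$ must lie in $(C^\circ)^\circ$, which holds because $y \in C$ and $C \subseteq (C^\circ)^\circ$ always. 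Hence $z = P_{C^\circ}(u)$.

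For $(a) \Rightarrow (b)$, assume $y = P_C(u)$ and $z = P_{C^\circ}(u)$. The cone characterization applied to $C$ gives $u - y \in C^\circ$ and $\langle u - y, y\rangle = 0$. I then claim the vector $u - y$ is in fact the projection $P_{C^\circ}(u)$: indeed $u - y \in C^\circ$, the complementary vector $u - (u - y) = y$ lies in $C \subseteq (C^\circ)^\circ$, and $\langle u - (u-y), u - y\rangle = \langle y, u - y\rangle = 0$, which are exactly the three conditions characterizing the projection onto the cone $C^\circ$. By uniqueness of the metric projection, $z = P_{C^\circ}(u) = u - y$, whence $u = y + z$ and $\langle y, z\rangle = \langle y, u - y\rangle = 0$.

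The only genuinely delicate points are the specialization of the variational inequality to a cone and the use of the inclusion $C \subseteq (C^\circ)^\circ$; I would emphasize that the full bipolar identity $(C^\circ)^\circ = C$ is not needed here, only the trivial inclusion. Everything else is bookkeeping once the cone-projection characterization is in place, so I expect the write-up to be short and to hinge almost entirely on that single lemma.
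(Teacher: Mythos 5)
Your proof is correct, and there is nothing in the paper to compare it against: the paper states Moreau's theorem without proof, citing the original note \cite{mor62}, and simply uses it as a black box in the proofs of Theorems~\ref{decomposition}--\ref{orthogonal}. Your argument is the standard one, reducing everything to the variational characterization $y = p_\fC(u) \Leftrightarrow \langle u - y, x - y\rangle \leqslant 0\ \forall x \in C$, and then specializing via $x = 0$ and $x = 2y$ to the cone criterion ($y \in C$, $u - y \in C^\circ$, $\langle u - y, y\rangle = 0$); all three applications of this criterion (to $y$ in $C$, to $z$ in $C^\circ$, and to $u - y$ in $C^\circ$) check out. Two points are handled correctly and deserve the emphasis you give them: applying the criterion to $C^\circ$ is legitimate because the polar of any set is a closed convex cone (as the paper itself notes), and only the trivial inclusion $C \subseteq (C^\circ)^\circ$ is needed rather than the bipolar identity $(C^\circ)^\circ = C$, so the proof avoids any separation argument and is valid in an arbitrary real Hilbert space, matching the generality in which the paper invokes the theorem.
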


One may ask whether Theorem~\ref{moreau} can be further generalized by replacing cones $C$ and $C^\circ$ with more general convex sets. Our main results (see Theorems~\ref{decomposition}--\ref{orthogonal} below) show that such an attempt leads to characteristic properties of convex cones which are polar of each other. In this regard, we provide a related result on separation of a cone $C \subseteq \R^n$ and a face of $C^\circ$ in $\R^n$ (see Theorem~\ref{aux}).

We recall that a nonempty convex set $C \subseteq H$ is a \textit{convex cone} if $\lambda x \in C$ whenever $\lambda \geqslant 0$ and $x \in C$. This definition implies that the origin $o\,$ of $H$ belongs to $C$ (indeed, $o = 0 x \in C$ for any $x \in C$), although a stronger condition $\lambda > 0$ can be beneficial; see, e.\,g., \cite{l-s16}.

The (negative) \textit{polar set} of a nonempty set $X \subseteq H$, denoted $X^\circ$, is defined by
\[
X^\circ = \{ x \in H : \langle x, e \rangle \leqslant  0 \ \ \forall\, e \in X \},
\]
where $\langle x, e \rangle$ stands for the scalar product of vectors $x$ and $e$. It is well known (and easy to prove) that the set $X^\circ$ is a closed convex cone for any choice of $X \subseteq H$.

In what follows, we will say that convex cones $C$ and $D$ in $H$ are \textit{polar of each other} if $C = D^\circ$ and $D = C^\circ$.

A classical result of linear analysis asserts that given a nonempty closed convex set $E \subseteq H$ and a vector $u \in H$, the set $E$ contains a unique vector $z$ which is nearest to $u$:
\[
\| u - z \| = \inf \{ \| u - x \| : x \in E \}.
\]
This nearest vector $z$ is denoted $p_\fE (u)$ and called the \textit{metric projection} of $u$ on $E$. Furthermore, if $u \in H \setminus E$, then $E$ lies in the closed halfspace
\[
\{ x \in H : \langle x - p_\fE (u), u - p_\fE (u) \rangle \leqslant  0 \}
\]
(see, for instance, Deutsch~\cite{deu01} for various results and references on this topic).

\begin{theorem} \label{decomposition}
Let $E$ and $F$ be nonempty closed convex sets in a real Hilbert space $H$. The following asser\-tions are equivalent.
\begin{enumerate}

\item[$(a)$] Every vector $u \in H$ is expressible as\, $u = p_\fE(u) + p_\fF(u)$.

\item[$(b)$] $E$ and $F$ are polar cones of each other.

\end{enumerate}
\end{theorem}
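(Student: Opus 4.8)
The plan is to prove the two implications separately. The implication $(b)\Rightarrow(a)$ is essentially Moreau's theorem: if $E$ and $F$ are polar cones of each other, then $E$ is a closed convex cone with $E^\circ=F$, and Theorem~\ref{moreau} applied to $C=E$ shows that the metric projections $y=p_\fE(u)$ and $z=p_\fF(u)$ satisfy $u=y+z$ for every $u\in H$. So the real content lies in $(a)\Rightarrow(b)$, which I would organize around the \emph{obtuse-angle criterion} for metric projections: for a closed convex set $E$ and any $u\in H$, the vector $y=p_\fE(u)$ is the unique point of $E$ with $\langle u-y,\,x-y\rangle\leqslant 0$ for all $x\in E$ (the halfspace property recalled above, valid for every $u$).

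Assuming $(a)$, the first step would be to locate the origin. Putting $a=p_\fE(o)$ and applying $(a)$ to $u=a$ gives $a=p_\fE(a)+p_\fF(a)=a+p_\fF(a)$, since $a\in E$ forces $p_\fE(a)=a$; hence $p_\fF(a)=o$, and because a projection always lands in its set, $o\in F$. The symmetric argument with $b=p_\fF(o)$ yields $o\in E$. The second step records the easy inclusions: if $u\in E$ then $p_\fE(u)=u$, so $(a)$ gives $p_\fF(u)=o$, and the obtuse-angle criterion for the projection onto $F$ at $u$ reads $\langle u,\,w\rangle\leqslant 0$ for all $w\in F$, i.e. $u\in F^\circ$. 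Thus $E\subseteq F^\circ$, and symmetrically $F\subseteq E^\circ$.

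The crux is the reverse inclusion $E^\circ\subseteq F$ (and, symmetrically, $F^\circ\subseteq E$). Here I would take $v\in E^\circ$, write $y=p_\fE(v)$ and $z=p_\fF(v)$ with $v=y+z$, and squeeze $y$ down to the origin. On one hand $\langle v,y\rangle\leqslant 0$ because $y\in E$ and $v\in E^\circ$. On the other hand, feeding $x=o\in E$ into the obtuse-angle criterion for the projection onto $E$ at $v$ gives $\langle z,\,-y\rangle\leqslant 0$, that is $\langle z,y\rangle\geqslant 0$; combining with $v=y+z$ yields
\[
0\;\geqslant\;\langle v,y\rangle\;=\;\|y\|^2+\langle z,y\rangle\;\geqslant\;\|y\|^2 .
\]
Hence $y=o$, so $v=z=p_\fF(v)\in F$, proving $E^\circ\subseteq F$. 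Together with the easy inclusions this gives $E=F^\circ$ and $F=E^\circ$; since every polar set is a closed convex cone, $E$ and $F$ are closed convex cones that are polar of each other, which is $(b)$.

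I expect the main obstacle to be exactly this reverse inclusion: the easy half only shows that $E$ and $F$ sit inside each other's polars, and it is the decomposition hypothesis $u=p_\fE(u)+p_\fF(u)$, used jointly with the variational inequality and the membership $o\in E\cap F$ secured in the first step, that forces equality. Establishing $o\in E\cap F$ at the outset is what makes the point $x=o$ available in the obtuse-angle criterion and thereby powers the squeezing estimate.
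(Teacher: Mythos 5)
Your proof is correct and follows essentially the same route as the paper: establish $o \in E \cap F$, obtain the easy inclusions $E \subseteq F^\circ$ and $F \subseteq E^\circ$ from the halfspace (variational) characterization of the metric projection, and then use the decomposition hypothesis at points of the polar set to force the reverse inclusion, with $(b) \Rightarrow (a)$ being Moreau's theorem in both cases. The only differences are local and cosmetic: you locate the origin by applying $(a)$ at $a = p_\fE(o)$, which neatly avoids the paper's separate treatment of the trivial case $E = \{o\}$, and in the crux you force $p_\fE(v) = o$ for $v \in E^\circ$ by the inner-product squeeze $0 \geqslant \langle v, y \rangle = \|y\|^2 + \langle z, y \rangle \geqslant \|y\|^2$ coming from the variational inequality at $x = o$, whereas the paper instead forces $p_\fF(u) = o$ for $u \in F^\circ$ by sandwiching $o \in F \subseteq V = \{ x \in H : \langle x, u \rangle \leqslant 0 \}$ and observing that $p_\fV(u) = o$.
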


The remaining theorems are formulated for the case of the $n$-dimensional space $\R^n$; it is \textit{an open question} whether they are extendable to infinite dimension.

\begin{theorem} \label{projections}
Let $E$ and $F$ be nonempty closed convex sets in $\R^n$. The following assertions are equivalent.
\begin{enumerate}

\item[$(a)$] $E + F = \R^n$ and the metric projections $p_\fE (u)$ and $p_\fF (u)$ are orthogonal for every choice of\, $u \in \R^n$.

\item[$(b)$] $E$ and $F$ are polar cones of each other.

\end{enumerate}
\end{theorem}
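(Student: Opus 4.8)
The plan is to treat the two implications separately, leaning on Moreau's Theorem~\ref{moreau} for $(b)\Rightarrow(a)$ and on Theorem~\ref{decomposition} for $(a)\Rightarrow(b)$. For $(b)\Rightarrow(a)$ the argument is immediate: assuming $E=F^\circ$ and $F=E^\circ$, I apply Theorem~\ref{moreau} with the closed convex cone $C=E$, whose polar is $C^\circ=E^\circ=F$. For every $u\in\R^n$ the condition $(b)$ of that theorem holds for $y=p_\fE(u)$ and $z=p_\fF(u)$, so $u=p_\fE(u)+p_\fF(u)$ with $p_\fE(u)$ and $p_\fF(u)$ orthogonal. The orthogonality is exactly what assertion $(a)$ demands, and the equality $u=p_\fE(u)+p_\fF(u)$ shows $u\in E+F$; as $u$ is arbitrary, $E+F=\R^n$.

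For $(a)\Rightarrow(b)$ my strategy is to reduce to Theorem~\ref{decomposition} by showing that hypothesis $(a)$ forces the full decomposition $u=p_\fE(u)+p_\fF(u)$ for every $u$, after which Theorem~\ref{decomposition} yields that $E$ and $F$ are polar cones. I would proceed in three steps. First, establish that $o\in E$ and $o\in F$. Second, use the orthogonality pointwise: for a fixed $e\in E$, applying it at $u=e$ (where $p_\fE(e)=e$) gives $\langle e,\,p_\fF(e)\rangle=0$, while the projection inequality for $p_\fF(e)$ tested against $o\in F$ gives $\langle e,\,p_\fF(e)\rangle\ge\|p_\fF(e)\|^2$; together these force $p_\fF(e)=o$. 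Hence $o$ is the nearest point of $F$ to every $e\in E$, equivalently $\langle e,f\rangle\le0$ for all $e\in E$ and $f\in F$; symmetrically $p_\fE(f)=o$ for all $f\in F$. Thus $E\subseteq F^\circ$ and $F\subseteq E^\circ$.

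Third, fix an arbitrary $u$, put $y=p_\fE(u)$, $z=p_\fF(u)$ and $w=u-y-z$; by hypothesis $\langle y,z\rangle=0$, and the inclusions just proved give $z\in N_E(y)$ and $y\in N_F(z)$, where $N_E,N_F$ denote normal cones. The defining inequalities of the projections say $u-y=z+w\in N_E(y)$ and $u-z=y+w\in N_F(z)$, and testing them against $o$ yields $\langle y,w\rangle\ge0$ and $\langle z,w\rangle\ge0$. The decisive use of $E+F=\R^n$ is that the tangent cones at $y+z$ satisfy $T_E(y)+T_F(z)=\R^n$, whence by polarity $N_E(y)\cap N_F(z)=\{o\}$; I would combine this triviality of the intersection with the membership relations above to conclude $w=o$, that is $u=p_\fE(u)+p_\fF(u)$. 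An equivalent endgame is to prove the reverse inclusion $F^\circ\subseteq E$ directly, decomposing a given $x\in F^\circ$ as $x=e+f$ via $E+F=\R^n$ and showing $f=o$.

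I expect the first and third steps to be the real obstacles. Proving $o\in E\cap F$ seems to need the full ``for every $u$'' strength of the orthogonality hypothesis rather than any single instance: the natural route is by contradiction, noting that if $o\notin F$ then projecting $o$ confines $F$ to a halfspace $\{x:\langle x,f_*\rangle\ge\|f_*\|^2\}$ with $f_*=p_\fF(o)\ne o$, while $E+F=\R^n$ forces $F$ to contain vectors arbitrarily far in the $-f_*$ direction, and evaluating orthogonality at well-chosen points should produce the contradiction. Likewise, squeezing $w$ into the trivial intersection $N_E(y)\cap N_F(z)=\{o\}$ is delicate, since the polarity inequalities $\langle e,f\rangle\le0$ keep entering with the ``wrong'' sign in the straightforward estimates; this is where I anticipate the argument needs the most care, and where the separation result Theorem~\ref{aux} is likely to help.
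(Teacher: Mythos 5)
Your $(b)\Rightarrow(a)$ direction is exactly the paper's (a direct application of Theorem~\ref{moreau}), and your first two steps are sound: the computation forcing $p_\fF(e)=o$ for every $e\in E$ (orthogonality at $u=e$ combined with the projection inequality tested against $o\in F$) is correct and recovers the paper's Lemmas giving $o\in E\cap F$, $E\subseteq F^\circ$ and $F\subseteq E^\circ$. The genuine gap is step 3. The data you assemble there --- $\langle y,z\rangle=0$, $z+w\in N_E(y)$, $y+w\in N_F(z)$, $\langle w,y\rangle\geqslant 0$, $\langle w,z\rangle\geqslant 0$, and $N_E(y)\cap N_F(z)=\{o\}$ (this last fact does follow from $E+F=\R^n$, as you say) --- do \emph{not} imply $w=o$, and no amount of care can close the endgame from these facts alone. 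Concretely, in $\R^2$ take $E=\{x: x_1\geqslant x_2^2\}$ and $F=\{(t,0): t\leqslant 0\}$. Then $o\in E\cap F$, $E\subseteq F^\circ$ (the right halfplane), $F\subseteq E^\circ$, and $E+F=\R^2$, so every conclusion of your steps 1 and 2 holds. At $u=(0,1)$ one has $z=p_\fF(u)=o$, so $p_\fE(u)\,\bot\,p_\fF(u)$ trivially, yet $u\notin E$, hence $w=u-p_\fE(u)\ne o$, while all of your membership and sign relations are satisfied at this $u$. The same example kills your alternative endgame, since $F^\circ\subseteq E$ fails outright. The moral: the decomposition $u=p_\fE(u)+p_\fF(u)$ is not a pointwise consequence of orthogonality at $u$ together with the step-1/step-2 inclusions; hypothesis $(a)$ must be invoked again at \emph{other} points (in the example, orthogonality indeed fails elsewhere, e.g.\ at $u=(-1,2)$), so the clean reduction to Theorem~\ref{decomposition} as you planned it cannot be completed.

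This is precisely where the paper's proof does its real work, and it does not reduce to Theorem~\ref{decomposition}. After the analogues of your steps 1--2, it first settles the case $o\in\Rint E\cup\Rint F$ (then $E$ and $F$ are complementary orthogonal subspaces, using $E+F=\R^n$), then proves $E\not\subseteq\Rbd F^\circ$ via the separation Theorem~\ref{aux}, and finally, assuming $E\ne F^\circ$, erects a supporting halfspace of $E$ at a nonzero point $y\in\Rbd E\cap\Rint F^\circ$ and examines the entire normal halfline $h_0=\{y+te: t\geqslant t_0\}$: every $u'\in h_0$ has $p_\fE(u')=y$, while a limiting halfspace argument (using $E+F=\R^n$ once more) produces some $u'\in h_0$ with $p_\fF(u')\notin\Lin(F^\circ)^\circ$, and (P4) then shows $y\in\Rint F^\circ$ and $p_\fF(u')$ cannot be orthogonal --- contradicting $(a)$ at the new point $u'$. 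Your proposal has no counterpart to this global construction, and Theorem~\ref{aux}, which you invoke only as a hopeful aid, enters the paper's argument in a quite different and essential way (to build the separating subspace behind the $\Rbd$ lemma), not to repair the normal-cone estimate you need.
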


Given an ordered pair of convex sets $E$ and $F$ and a vector $u \in \R^n$, expressions $u = y_1 + z_1$ and $u = y_2 + z_2$, where $y_1, y_2 \in E$ and $z_1, z_2 \in F$, will be called different if and only if $y_1 \ne y_2$ (equivalently, $z_1 \ne z_2$).

\begin{theorem} \label{orthogonal}
Let $E$ and $F$ be nonempty convex sets in $\R^n$. The following assertions are equivalent.
\begin{enumerate}

\item[$(a)$] Every vector $u \in \R^n$ is uniquely expressible as $u = y + z$, where $y$ and $z$ are orthogonal vectors from $E$ and $F$, respectively.

\item[$(b)$] $E$ and $F$ are polar cones of each other.

\end{enumerate}
\end{theorem}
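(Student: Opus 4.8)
The plan is to prove the two implications separately, reserving the real work for $(a)\Rightarrow(b)$.

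For $(b)\Rightarrow(a)$, suppose $E$ and $F$ are polar cones, so that $E=F^\circ$ and $F=E^\circ$ are closed convex cones. Given $u\in\R^n$, apply Moreau's Theorem~\ref{moreau} with $C=E$ (so $C^\circ=F$): the metric projections $p_\fE(u)$ and $p_\fF(u)$ are orthogonal vectors of $E$ and $F$ whose sum is $u$, which yields existence of the decomposition in $(a)$. For uniqueness, let $u=y+z$ with $y\in E$, $z\in F$ and $\langle y,z\rangle=0$; since $y\in C$, $z\in C^\circ$, $u=y+z$ and $y\perp z$, the implication $(b)\Rightarrow(a)$ of Theorem~\ref{moreau} forces $y=p_\fE(u)$ and $z=p_\fF(u)$, so $y$ and $z$ are uniquely determined.

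For $(a)\Rightarrow(b)$ I would first extract several consequences of $(a)$ that use only existence of decompositions together with orthogonality. Applying $(a)$ to $u=o$ and using $z=-y$, $\langle y,z\rangle=0$ gives $\|y\|^2=0$, so $o\in E\cap F$; for $x\in E$ the pair $(x,o)$ is an admissible orthogonal decomposition, hence by uniqueness it is \emph{the} decomposition of $x$ (symmetrically $(o,w)$ for $w\in F$); and if some $x\ne o$ lay in $E\cap F$, then $x=x+o=o+x$ would be two distinct decompositions, whence $E\cap F=\{o\}$. Next, for $v\in F^\circ$ with decomposition $v=y+z$ one has $\langle v,z\rangle\le 0$ because $z\in F$, while $\langle v,z\rangle=\langle y+z,z\rangle=\|z\|^2\ge 0$; hence $z=o$ and $v=y\in E$, proving $F^\circ\subseteq E$. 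The symmetric argument gives $E^\circ\subseteq F$.

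The crux, and the step I expect to be the main obstacle, is the reverse ``cross inequality'' $\langle x,w\rangle\le 0$ for all $x\in E$, $w\in F$, equivalently $E\subseteq F^\circ$ (equivalently $F\subseteq E^\circ$). This cannot follow from the inclusions $F^\circ\subseteq E$, $E^\circ\subseteq F$ alone: the pair $E=F=\R^n$ satisfies both inclusions yet violates the cross inequality, and correspondingly fails the uniqueness in $(a)$. Thus the cross inequality must genuinely exploit \emph{uniqueness}. My plan is to argue by contradiction: assuming $\langle x_0,w_0\rangle>0$ for some $x_0\in E$, $w_0\in F$, I would produce two distinct orthogonal decompositions of a suitable vector, contradicting $(a)$; the natural candidates are $x_0+w_0$ together with points of the segments $[o,x_0]\subseteq E$ and $[o,w_0]\subseteq F$. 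Here finite-dimensionality enters decisively, since the bound $\|y\|,\|z\|\le\|u\|$ coming from $\|u\|^2=\|y\|^2+\|z\|^2$ lets one pass to convergent subsequences and realize a competing decomposition as a limit; this compactness is presumably why the statement is confined to $\R^n$. Once the cross inequality is available, combining $E\subseteq F^\circ$ with $F^\circ\subseteq E$ gives $E=F^\circ$, and symmetrically $F=E^\circ$; in particular $E$ and $F$ are closed convex cones polar of each other, which is $(b)$. Alternatively, the cross inequality identifies the orthogonal decomposition of $u$ with the pair of metric projections onto $\Cl E$ and $\Cl F$, after which Theorem~\ref{projections} (or Theorem~\ref{decomposition}) completes the argument.
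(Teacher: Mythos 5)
Your $(b)\Rightarrow(a)$ direction is complete and correct, and your preliminary deductions from $(a)$ --- namely $o\in E\cap F$, $E\cap F=\{o\}$, and the inclusions $F^\circ\subseteq E$, $E^\circ\subseteq F$ --- coincide with steps in the paper's proof (the $\|z\|^2$ computation is exactly the paper's). But at the decisive step, the reverse inclusions $E\subseteq F^\circ$ and $F\subseteq E^\circ$, the proposal stops being a proof: you correctly identify this as the crux, and even show via $E=F=\R^n$ that it cannot follow from what precedes, but then offer only a plan --- derive a contradiction from $x_0\cdot w_0>0$ by producing two distinct orthogonal decompositions of a suitable vector, with compactness from $\|u\|^2=\|y\|^2+\|z\|^2$. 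No such pair of decompositions is constructed, and the compactness remark does not do the work assigned to it: $u=x_0+w_0$ is not itself an admissible decomposition (its summands fail orthogonality precisely because $x_0\cdot w_0>0$), and a limit of orthogonal decompositions of varying vectors $u_k$ is not automatically a second decomposition of a fixed vector distinct from a given one unless you control both the limit vector and the limit summands, which the sketch does not attempt. So the theorem is not proved, and your guess that finite-dimensionality enters through this compactness is not borne out by the actual argument.

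The paper closes the gap with two ideas absent from your proposal. First, uniqueness forces $E$ and $F$ to be \emph{cones}: for $u\in E$ and $\lambda>1$, write $\lambda u=y+z$ with $y\in E$, $z\in F$ orthogonal; then $u=\lambda^{-1}y+\lambda^{-1}z$ is an orthogonal decomposition with $\lambda^{-1}y\in[o,y]\subseteq E$ and $\lambda^{-1}z\in[o,z]\subseteq F$ by convexity through $o$, and comparing it with the decomposition $u=u+o$ gives $u=\lambda^{-1}y$, i.e. $\lambda u=y\in E$. Second, with $E$ a cone and $E^\circ\subseteq F$ already in hand, the cross inclusion follows from Moreau's theorem rather than from any limiting argument: for $u\in F$, Theorem~\ref{moreau} applied to the (closure of the) cone $E$ gives $u=y+z$ with $y$, $z$ orthogonal and $z\in E^\circ\subseteq F$, while $u=o+u$ is also an orthogonal decomposition with $o\in E$, $u\in F$; uniqueness in $(a)$ forces $y=o$, hence $u=z\in E^\circ$, so $F\subseteq E^\circ$, and symmetrically $E\subseteq F^\circ$. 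Note also that your fallback --- invoking Theorem~\ref{projections} or Theorem~\ref{decomposition} ``once the cross inequality is available'' --- cannot serve as a repair, since the cross inequality is exactly what is missing, and those theorems moreover require closedness of $E$ and $F$, which you have not established (once $E=F^\circ$ and $F=E^\circ$ are known, closedness is automatic and no further appeal is needed).
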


One more result of a similar spirit characterizes complementary planes in $\R^n$. We recall that a \textit{plane} $L$ (a flat, or an affine subspace, in other terminology) of dimension $m$ in $\R^n$, $0 \leqslant  m \leqslant  n$, is a translate of an $m$-dimensional subspace $S$ of $\R^n$: $L = c + S$ for a suitable vector $c \in \R^n$. Planes $L_1$ and $L_2$ in $\R^n$ are called \textit{complementary} if they are translates of complementary subspaces $S_1$ and $S_2$:
\[
L_1 = c_1 + S_1, \ \ L_2 = c_2 + S_2, \ \ S_1 + S_2 = \R^n, \ \ S_1 \cap S_2 = \{ o \}.
\]

\begin{theorem} \label{slanted}
Let $E$ and $F$ be nonempty convex sets in $\R^n$. The following assertions are equivalent.
\begin{enumerate}

\item[$(a)$] Every vector $u \in \R^n$ is uniquely expressible as $u = y + z$, where $y \in E$ and $z \in F$.

\item[$(b)$] $E$ and $F$ are complementary planes.

\end{enumerate}
\end{theorem}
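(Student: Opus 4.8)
The plan is to establish the two implications separately; $(b)\Rightarrow(a)$ is routine, while $(a)\Rightarrow(b)$ carries the substance. For $(b)\Rightarrow(a)$, I would write $E = c_1 + S_1$ and $F = c_2 + S_2$ with complementary subspaces $S_1 + S_2 = \R^n$ and $S_1 \cap S_2 = \{o\}$. Given $u \in \R^n$, the vector $u - c_1 - c_2$ has a unique representation $s_1 + s_2$ with $s_i \in S_i$; setting $y = c_1 + s_1 \in E$ and $z = c_2 + s_2 \in F$ yields $u = y + z$, and both the existence and the uniqueness descend directly from the direct-sum decomposition $\R^n = S_1 \oplus S_2$.

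For $(a)\Rightarrow(b)$, I would first recast $(a)$ in the language of Minkowski operations. Existence of the representation for every $u$ means $E + F = \R^n$, while uniqueness is equivalent to $(E - E)\cap(F - F) = \{o\}$: two representations $y_1 + z_1 = y_2 + z_2$ of one vector produce a common element $y_1 - y_2 = z_2 - z_1$ of $E - E$ and $F - F$, and conversely such an element yields two representations. Let $S_1 = \Span(E - E)$ and $S_2 = \Span(F - F)$ be the direction subspaces of $\Aff E$ and $\Aff F$. Passing to affine hulls in $E + F = \R^n$ gives $\Aff E + \Aff F = \R^n$, an affine set with direction subspace $S_1 + S_2$; hence $S_1 + S_2 = \R^n$.

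The crucial step is $S_1 \cap S_2 = \{o\}$, and here lies the main obstacle: uniqueness only controls the difference bodies $E - E$ and $F - F$, which may be far smaller than their spans $S_1$ and $S_2$. I would bridge this gap with the elementary observation that a nonempty symmetric convex set contains the origin in its relative interior — choosing $p \in \Rint(E - E)$, symmetry gives $-p \in \Rint(E - E)$, so $o = \tfrac{1}{2} p + \tfrac{1}{2}(-p) \in \Rint(E - E)$, and likewise $o \in \Rint(F - F)$. Consequently, for any $v \in S_1 \cap S_2$ all sufficiently small multiples $tv$ lie in both $E - E$ and $F - F$, whence $tv \in (E - E)\cap(F - F) = \{o\}$ and $v = o$. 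Thus $S_1$ and $S_2$ are complementary, and $\Aff E$, $\Aff F$ are complementary planes.

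Finally I would upgrade $\Aff E$ to $E$ itself by invoking existence once more. Fix $c_1 \in E$ and $c_2 \in F$ so that $\Aff E = c_1 + S_1$ and $\Aff F = c_2 + S_2$, and let $a \in S_1$ be arbitrary. Applying existence to $u = c_1 + c_2 + a$ and writing the representation as $y = c_1 + a'$, $z = c_2 + b'$ with $a' \in S_1$, $b' \in S_2$, the equality $a - a' = b' \in S_1 \cap S_2 = \{o\}$ forces $y = c_1 + a \in E$. Since $a$ ranges over all of $S_1$, we obtain $c_1 + S_1 \subseteq E \subseteq \Aff E = c_1 + S_1$, so $E = \Aff E$ is a plane; the same argument applied to $F$ completes the proof. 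I would also remark that closedness of $E$ and $F$ is never assumed — it emerges from the conclusion that they are planes.
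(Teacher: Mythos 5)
Your proposal is correct, and it proves $(a)\Rightarrow(b)$ by a genuinely different and more elementary route than the paper. The paper translates by the unique representation $o = c + (-c)$ to obtain sets $E_0 = E - c$ and $F_0 = F + c$ containing the origin with $E_0 \cap F_0 = \{o\}$, shows by the scaling--uniqueness trick borrowed from the proof of Theorem~\ref{orthogonal} that $E_0$ and $F_0$ are convex cones, and then runs a dichotomy on whether $o \in \Rint E_0 \cup \Rint F_0$: in the affirmative case, (P3) together with a supporting-hyperplane argument forces both cones to be subspaces, complementary because $E_0 + F_0 = \R^n$ and $E_0 \cap F_0 = \{o\}$; the negative case is refuted by separating $E_0$ from $-F_0$ (whose intersection is $\{o\}$, again by uniqueness) and contradicting $E_0 + F_0 = \R^n$. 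You bypass cones and separation entirely: you encode uniqueness once and for all as $(E-E)\cap(F-F) = \{o\}$, read off $S_1 + S_2 = \R^n$ from existence by passing to affine hulls, obtain $S_1 \cap S_2 = \{o\}$ from the observation that a difference body is a symmetric convex set and hence contains $o$ in its relative interior --- which is exactly and only where convexity enters, matching the parabola example in the remarks of Section~\ref{1-intro} showing convexity cannot be dropped --- and then fill out $E = \Aff E$ by one more appeal to existence, a step the paper never needs because its cone dichotomy delivers subspaces directly. All of your individual steps check out: the equivalence of uniqueness with $(E-E)\cap(F-F)=\{o\}$, the inclusion $o \in \Rint(E-E)$ via the midpoint of $p$ and $-p$, and the cancellation $a - a' = b' \in S_1 \cap S_2 = \{o\}$ in the final step. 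As for what each approach buys: the paper's proof keeps the treatment uniform with Theorems~\ref{projections} and~\ref{orthogonal} by reusing the cone machinery ((P3), proper separation of cones), whereas yours is shorter and self-contained, requires no supporting or separating hyperplanes at all, and makes transparent that closedness of $E$ and $F$ is a conclusion rather than a hypothesis; on the other hand, your difference-body device is specific to the unrestricted decomposition of Theorem~\ref{slanted} and would not transfer to the orthogonality-constrained setting of Theorem~\ref{orthogonal}.
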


The remarks below illustrate Theorems~\ref{decomposition}--\ref{slanted}.
\begin{enumerate}

\item[1.] The assumption on closedness of both sets $E$ and $F$ is essential in Theorems~\ref{decomposition} and~\ref{projections} since it guarantees the existence of metric projections $p_\fE (u)$ and $p_\fF (u)$. On the other hand, the open unit ball $U = \{ x \in \R^n : \| x \| < 1 \}$ contains no vector nearest to a given vector $u \in \R^n \setminus U$.

\item[2.] Assertion $(a)$ in Theorem~\ref{projections} cannot be replaces with the following weaker version:

$(a')$ $E + F = \R^n$ and for every choice of\, $u \in \R^n$, the metric projections of $u$ on $\Cl E$ and $\Cl F$ are orthogonal. Indeed, let
\[
E = \{ (x, y) : x, y > 0 \} \cup \{ o \} \quad \text{and} \quad F = \{ (x, y) : x, y < 0 \} \cup \{ o \}.
\]
Clearly, both sets $E$ and $F$ are nonclosed convex cones with apex $o$. These cones satisfy assertion $(a')$ but are not polar of each other.

\item[3.] The assumption $E + F = \R^n$ is essential in assertion $(a)$ of Theorem~\ref{projections}. For instance, let $E$ and $F$ be closed intervals which lie, respectively, in the coordinate axes of the plane $\R^2$. Then $p_\fE (u) \bot p_\fF (u)$ for any vector $u \in \R^2$, while neither $E$ nor $F$ is a convex cone.

\item[4.] The assumption on uniqueness of expressions $u = y + z$ is essential in assertion $(a)$ of Theorem~\ref{orthogonal}. For instance, let $E$ and $F$ denote, respectively, the upper and lower closed halfplanes of $\R^2$ determined by the $x$-axis. Then any vector $u \in \R^2$ can be written as a sum of orthogonal vectors $y \in E$ and $z \in F$, while $E$ and $F$ are not polar cones of each other.

\item[5.] The previous remark shows that uniqueness of representations $u = y + z$ also is essential in assertion $(a)$ of Theorem~\ref{slanted}. Similarly, the assumption on convexity of the sets $E$ and $F$ in this theorem cannot be omitted. Indeed, let $E$ be the $y$-axis of $\R^2$ and $F \subseteq \R^2$ be the parabola given by the equation $y = x^2$. Then any vector $u \in \R^2$ is uniquely expressible as $x = y + z$, where $x \in E$ and $z \in F$, while $E$ and $F$ are not complementary planes of $\R^2$.

\end{enumerate}

\section{Proof of Theorem~\ref{decomposition}}

$(a) \Rightarrow (b)$. First, we consider the trivial case when at least one of the sets $E$ and $F$, say $E$, equals $\{ o \}$. Choose any vector $u \in H$. By the assumption,
\[
u = p_\fE(u) + p_\fF(u) = o + p_\fF(u) = p_\fF(u) \in F.
\]
Consequently, $F = H = \{ o \}^\circ = E^\circ$ and $E = \{ o \} = H^\circ = F^\circ$, as desired.

So, we may suppose that both sets $E$ and $F$ are distinct from $\{ o \}$. Choose a nonzero vector $u \in F$. Then $u = p_\fF(u)$, which gives $p_\fE(u) = u - p_\fF(u) = o$. Hence $o$ is the vector in $E$ nearest to $u$, and $E$ lies in the closed halfspace
\begin{equation} \label{eqn-decomposition}
V = \{ x \in H : \langle x, u \rangle \leqslant  0 \}.
\end{equation}
Thus,
\[
o \in E \subseteq \{ x \in H : \langle x, u \rangle \leqslant  0 \ \ \forall\, u \in F \} = F^\circ.
\]
Similarly, $o \in F \subseteq E^\circ$.

For the opposite inclusion, $F^\circ \subseteq E$, choose any vector $u \in F^\circ$. Then $F$ lies in the halfspace \eqref{eqn-decomposition}. Clearly, $p_\fV (u) = o$. Since $o \in F \subseteq V$, we also have $p_\fF (u) = o$. Therefore,
\[
u = p_\fE(u) + p_\fF(u) = p_\fE(u) + o = p_\fE(u) \in E.
\]
So, $F^\circ \subseteq E$. Summing up, $E = F^\circ$, which shows that $E$ is a closed convex cone. By a similar argument, $F$ is a closed convex cone, with $F = E^\circ$.

$(b) \Rightarrow (a)$. This part follows from Theorem~\ref{moreau}.

\section{Some Auxiliary Results}

This section describes the notation and auxiliary results on convex sets and cones in $\R^n$ which are necessary for the proofs of Theorems~\ref{projections}--\ref{slanted}. To avoid confliction of notation, the standard \textit{dot product} of vectors $x, y \in \R^n$ will be denoted by $x \! \cdot \! y$ (instead of $\langle x, y \rangle$). Vectors $x$ and $y$ are called \textit{orthogonal} (notation $x \bot y$) if $x \! \cdot \! y = 0$. Given distinct vectors $x, z \in \R^n$, by $[x, z]$ and $(x, z)$ we will mean the \textit{closed} and \textit{open} (line) \textit{segments} with end vectors $x, z \in \R^n$, defined, respectively, by
\[
[x, z] = \{ (1 - \lambda) x + \lambda z : 0 \leqslant \lambda \leqslant 1 \}, \quad (x, z) = \{ (1 - \lambda) x + \lambda z : 0 < \lambda < 1 \}.
\]

The \textit{orthogonal complement} of a nonempty set $X \subseteq \R^n$ is defined by
\[
X^\bot = \{ x \in \R^n : x \! \cdot \! u = 0 \ \ \forall\, u \in X \},
\]
while $\Span X$ denotes the (linear) \textit{span} of $X$. Obviously, $(\Span X)^\bot = X^\bot$. Furthermore, $\Dim X$, $\Cl X$, $\Int X$, and $\Bd X$, stand, respectively, for the dimension, closure, interior, and boundary of $X$. The set
\[
\Pos X = \{ \lambda_1 x_1 + \dots + \lambda_k x_k : k \geqslant 1, \, \lambda_i \geqslant 0, \, x_i \in X, \ 1 \leqslant  i \leqslant  k \}
\]
is called the \textit{positive hull} of $X$. It is well known (see, e.\,g., \cite{s-w70}, page 32) that $\Pos X$ is the smallest convex cone with apex $o$ which contains $X$.

If $L \subseteq \R^n$ is a plane of positive dimension, then a \textit{closed halfplane} $P$ of $L$ is the set of the form $P = L \cap V$, where $V$ is a closed halfspace of $\R^n$ satisfying the condition $\varnothing \ne L \cap V \ne L$. If a convex set $F \subseteq \R^n$ is not $n$-dimensional (that is, if $\Aff F$ is not $\R^n$), then $\Rbd F$ and $\Rint F$ will denote the \textit{relative boundary} and \textit{relative interior} of $F$.

It is assumed that the reader is familiar with basic concepts and results of convex analysis. Therefore, various standard results on convex sets are often given here without specific references (see, e.\,g., \cite{sol15} for details). For reader's convenience, necessary properties of convex cones are given below as the list of propositions (P1)--(P8). In these propositions, $C$ denotes a closed convex cone in $\R^n$.

\begin{enumerate}

\item[(P1)] The set $\Lin C = C \cap (-C)$, called the \textit{lineality space} of $C$, is the largest subspace contained in $C$.

\item[(P2)] $C \cap C^\circ = \{ o \}$, $(C^\circ)^\circ = C$, $\Span C = (\Lin C^\circ)^\bot$, and $\Span C^\circ = (\Lin C)^\bot$.

\item[(P3)] $C$ is a subspace if and only if any of the following conditions holds: (i) $C = \Lin C$, (ii) $o \in \Rint C$, (iii) $\Lin C \cap \Rint C \ne \varnothing$, (iv) $C^\circ$ is a subspace, (v) $C^\circ = C^\bot$.

\item[(P4)] If $C$ is not a subspace, then a vector $u \in \R^n$ belongs to $\Rint C$ if and only if $u \! \cdot \! v < 0$ for every vector $v \in C^\circ \setminus \Lin C^\circ$.

\item[(P5)] If $X \subseteq \R^n$ is a nonempty set, then $\Cl (\Pos X)$ is the smallest closed convex cone containing $X$. Furthermore,
\[
X^\circ = (\Pos X)^\circ = (\Cl (\Pos X))^\circ \quad \text{and} \quad \Cl (\Pos X) = (X^\circ)^\circ
\]

\item[(P6)] For a nonempty convex set $E \subseteq \R^n$, the following conditions are equivalent: (i) $o \in \Rint E$, (ii) $\Pos E$ is a subspace, (iii) $\Cl (\Pos E)$ is a subspace.

\item[(P7)] If a hyperplane $H \subseteq \R^n$ supports $C$, then $\Lin C \subseteq H$.

\item[(P8)] (see~\cite{sol18}) If $C_1$ and $C_2$ are closed convex cones in $\R^n$ satisfying the condition
\[
C_1 \cap C_2 = \Lin C_1 \cap \Lin C_2,
\]
then there is an $(n - 1)$-dimensional subspace $S \subseteq \R^n$ separating $C_1$ and $C_2$ such that
\[
C_1 \cap S = \Lin C_1 \quad \text{and} \quad C_2 \cap S = \Lin C_2.
\]

\end{enumerate}

The proof of Theorem~\ref{projections} uses the following result of proper interest.

\begin{theorem} \label{aux}
Let $C \subseteq \R^n$ be a closed convex cone distinct from a subspace, $B$ be a convex cone which lies in $\Rbd C$, and $D$ be the orthogonal complement of $B$ in $C^\circ$:
\[
D = \{ x \in C^\circ : x \! \cdot \! u = 0 \ \ \forall\, u \in B \}.
\]
The following assertions hold.
\begin{enumerate}

\item[$(a)$] $D$ is a closed convex cone.

\item[$(b)$] If $B \subseteq \Lin C$, then $D = C^\circ$, and if $D \not\subseteq \Lin C$, then $D$ lies in $\Rbd C^\circ$ but not in $\Lin C^\circ$.

\item[$(c)$] There is an $(n - 1)$-dimensional subspace $S \subseteq \R^n$ containing $B$ and separating $C$ and $D$ such that $\Rint C$ and $\Rint D$ lie in the opposite open halfspaces of\, $\R^n$ determined by $S$.

\end{enumerate}
\end{theorem}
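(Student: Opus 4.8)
The plan is to handle the three assertions in turn, with essentially all of the work concentrated in part $(c)$.

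For $(a)$ I would simply observe that $D = C^\circ \cap B^\bot$ is the intersection of the closed convex cone $C^\circ$ with the subspace $B^\bot$ (itself a closed convex cone), and that $o \in D$. Since an intersection of closed convex cones is a closed convex cone, $(a)$ is immediate. For $(b)$ I would split according to whether $B \subseteq \Lin C$ (this is the intended dichotomy). If $B \subseteq \Lin C$, then as $\Lin C$ is a subspace contained in $C$, each $x \in C^\circ$ satisfies $x \cdot u \le 0$ and $x \cdot (-u) \le 0$ for $u \in \Lin C$, hence $x \cdot u = 0$; thus $C^\circ \subseteq B^\bot$ and $D = C^\circ$. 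If $B \not\subseteq \Lin C$, pick $u \in B \setminus \Lin C \subseteq C \setminus \Lin C$: for $x \in D$ we have $x \cdot u = 0$, whereas $x \in \Rint C^\circ$ would force $x \cdot u < 0$ by (P4) applied to $C^\circ$ (legitimate since $(C^\circ)^\circ = C$ and $C^\circ$ is not a subspace by (P3)), so $D \subseteq \Rbd C^\circ$.

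To finish $(b)$, I must show $D \not\subseteq \Lin C^\circ$. Here I would take $p \in \Rint B$ (nonempty since $B \ne \{o\}$) and a supporting hyperplane of $C$ at $p$ that does not contain $C$; by homogeneity of the cone it may be taken through $o$, with outward normal $v \in C^\circ \setminus \Lin C^\circ$ and $v \cdot p = 0$. The key local computation is that such $v$ is automatically orthogonal to all of $B$: given $b \in B$, there is $\delta > 0$ with $p + \delta(p - b) \in B \subseteq C$, and combining $v \cdot (p + \delta(p - b)) \le 0$, $v \cdot p = 0$, and $v \cdot b \le 0$ forces $v \cdot b = 0$. Hence $v \in C^\circ \cap B^\bot = D$ with $v \notin \Lin C^\circ$.

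For $(c)$ the decisive observation is that the desired normal $w$ of $S$ is heavily constrained. Requiring $B \subseteq w^\bot$ and $C$ on the nonpositive side forces $w \in C^\circ \cap B^\bot = D$, while requiring $D$ on the nonnegative side forces $w$ into the dual cone $-D^\circ = \{w : w \cdot y \ge 0\ \forall\, y \in D\}$; by (P4) applied to $C$ and to $D$, the two strictness conditions on $\Rint C$ and $\Rint D$ become $w \notin \Lin C^\circ$ and $w \notin \Lin D^\circ$. Two bookkeeping facts then trivialize the lineality conditions: from $\Lin C^\circ = (\Span C)^\bot \subseteq D$ and $\Lin C^\circ \subseteq \Lin D$ one gets $\Lin C^\circ \cap (-D^\circ) \subseteq \Lin D \cap (-D^\circ) = \{o\}$, and $D \cap \Lin D^\circ = D \cap (\Span D)^\bot = \{o\}$. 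Thus \emph{any} nonzero $w \in D \cap (-D^\circ)$ avoids both lineality spaces, and the whole of $(c)$ collapses to the single claim $D \cap (-D^\circ) \ne \{o\}$.

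This nonvanishing is the main obstacle, and I expect to settle it with Moreau's theorem. First I would note $D$ is not a subspace: if $B \subseteq \Lin C$ then $D = C^\circ$, not a subspace by (P3) since $C$ is not; and if $B \not\subseteq \Lin C$ then $D \not\subseteq \Lin C^\circ$ by $(b)$, whereas a subspace $D \subseteq C^\circ$ would have to equal $\Lin C^\circ$. Since $D$ is not a subspace, $-D^\circ \not\subseteq \Lin D^\circ$, so I may choose $c \in (-D^\circ) \setminus \Lin D^\circ$ and set $w := p_\fD(c)$. By Theorem~\ref{moreau}, the residual $c - w$ is the metric projection of $c$ on $D^\circ$, so $c - w \in D^\circ$ and $w \bot (c - w)$; then for every $y \in D$, $\ w \cdot y = c \cdot y - (c - w) \cdot y \ge c \cdot y \ge 0$, giving $w \in -D^\circ$, while $w \ne o$ because $w = o$ would place $c \in D^\circ \cap (-D^\circ) = \Lin D^\circ$. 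Hence $w \in (D \cap (-D^\circ)) \setminus \{o\}$. I would then take $S = w^\bot$, an $(n-1)$-dimensional subspace containing $B$ (from $w \in B^\bot$), with $C$ and $\Rint C$ on the strictly nonpositive side (from $w \in C^\circ \setminus \Lin C^\circ$ via (P4)) and $D$ and $\Rint D$ on the strictly nonnegative side (from $-w \in D^\circ \setminus \Lin D^\circ$ via (P4)), which establishes $(c)$.
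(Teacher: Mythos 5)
Your proposal is correct, and for the substantial part $(c)$ it takes a genuinely different route from the paper. The paper proves $(c)$ by a case split: when $B \subseteq \Lin C$ it applies the cone-separation theorem (P8) of \cite{sol18} directly to the pair $C$, $C^\circ$; when $B \not\subseteq \Lin C$ it projects $C$ orthogonally onto $L = \Span D$, shows the projection $C'$ satisfies $D \cap \Cl C' = \Lin D \cap \Lin (\Cl C') = \{o\}$ inside $L$, invokes (P8) within $L$ to get a separating subspace $N$, and lifts to $S = N + L^\bot$. You instead reduce everything to producing a nonzero vector $w \in D \cap (-D^\circ)$ -- I checked your bookkeeping identities $\Lin C^\circ = (\Span C)^\bot \subseteq D$, $\Lin D \cap (-D^\circ) = \{o\}$, and $D \cap \Lin D^\circ = \{o\}$, which via (P4) indeed make any such $w$ serve as the normal of $S = w^\bot$ with strict separation of the relative interiors -- and you obtain $w = p_\fD(c)$ for $c \in (-D^\circ) \setminus \Lin D^\circ$ by Moreau's theorem, verifying $w \in -D^\circ$ from $w \cdot y = c \cdot y - (c-w)\cdot y \geqslant 0$ and $w \ne o$ from $c \notin \Lin D^\circ = D^\circ \cap (-D^\circ)$. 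This buys a unified, case-free argument whose only external inputs are (P1)--(P4) and Moreau's theorem itself (thematically apt here), entirely avoiding (P8) and the projection-to-$L$ reduction, at the mild cost of first establishing that $D$ is never a subspace (which you do correctly). Your part $(b)$ also differs in mechanics: the paper extracts a vector of $D \setminus \Lin C^\circ$ as the Moreau residual $z = u - y$ for $u = e + y$ with $y \in \Rint B \setminus \Lin C$, while you take the normal $v$ of a hyperplane properly supporting $C$ at $p \in \Rint B$ and prove $v \bot B$ by the relative-interior extension trick; the one step you leave implicit -- that $v \notin \Lin C^\circ$ -- follows exactly as in the paper, since $v \in \Lin C^\circ$ would give $C \subseteq \Span C = (\Lin C^\circ)^\bot \subseteq \{v\}^\bot$, contradicting proper support. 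Both of your constructions are sound, so the proposal stands as a complete and arguably more self-contained alternative proof.
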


\begin{proof}
Because $C$ is not a subspace, (P3) shows that $C^\circ$ is not a subspace.

$(a)$ $D$ is a closed convex cone as the intersection of $C^\circ$ and the subspace $B^\bot$.

$(b)$ If $B \subseteq \Lin C$, then, by (P2), $C^\circ \subseteq (\Lin C)^\bot \subseteq B^\bot$. Therefore,
\[
C^\circ \subseteq B^\bot \cap C^\circ = D \subseteq C^\circ,
\]
which gives $D = C^\circ$.

Let $B \not\subseteq \Lin C$. Since $B$ is a convex subset of $\Rbd C$, there is a hyperplane $H \subseteq \R^n$ which contains $B$ and supports $C$ such that $C \not\subseteq H$ (see, e.\,g., \cite{sol15}, Corollary~6.9). By (P7), $H$ contains $\Lin C$ and whence $H$ is an $(n - 1)$-dimensional subspace. Denote by $V$ the closed halfspace of $\R^n$ determined by $H$ and containing $C$ and express it as
\[
V = \{ x \in \R^n : x \! \cdot \! e \leqslant 0 \},
\]
where $e$ is a suitable nonzero vector. Choose a vector $y \in \Rint B \setminus \Lin C$ (clearly, $\Rint B \not\subseteq \Lin C$, since otherwise $B \subseteq \Cl (\Rint B) \subseteq \Lin C$, contrary to the assumption $B \not\subseteq \Lin C$). Let $u = e + y$. Obviously, $p_\fV (u) = y$. Therefore, $p_\fE (u) = y$ due to $y \in E \subseteq V$.

We contend that $e \in D$. Indeed, denote by $z$ the metric projection of $u$ on $C^\circ$. By Theorem~\ref{moreau}, $y \bot z$ and $u = y + z$. So, $e = u - y = z \in C^\circ$. Because $e \in H^\bot \subseteq B^\bot$, we obtain the inclusion $e \in C^\circ \cap B^\bot = D$.

If $D$ contained a vector $v \in \Rint C^\circ$, then (since $C^\circ$ is not a subspace) we would have $v \! \cdot \! y< 0$ due to (P4), formulated for $C^\circ$, and the choice of $y \in C \setminus \Lin C$. Hence $D \subseteq \Rbd C^\circ$. Also, $D$ does not lie in $\Lin C^\circ$. Indeed, assuming that $D \subseteq \Lin C^\circ$, we obtain that $z \in \Lin C^\circ$, and (P2) gives
\[
C \subseteq \Span C = (\Lin C^\circ)^\bot \subseteq \{ z \}^\bot = H,
\]
contrary to $C \not\subseteq H$. By (P1), $\Lin C^\circ$ is the largest subspace contained in $C^\circ$. Therefore, $D$ is not a subspace (otherwise $D \subseteq \Lin C^\circ$).

$(c)$ If $B \subseteq \Lin C$, then $D = C^\circ$ by assertion $(b)$, and the existence of a desired subspace $S$ follows from (P8) and the equality $C \cap C^\circ = \{ o \}$. Suppose that $B \not\subseteq \Lin C$. In what follows, we use the notation from part $(b)$.

Let $L = \Span D$ and $M = \Span B$. Because $B$ and $D$ are orthogonal sets, it is obvious that their spans are orthogonal subspaces. Furthermore, $M \subseteq H$ due to $B \subseteq H$. Denote by $C'$ the orthogonal projection of $C$ on $L$. Clearly, $C'$ is a convex (not necessarily closed) cone which lies in the closed halfplane $L \cap V$ of $L$. Since $C \not\subseteq H$, the cone $C'$ is properly supported by the subspace $L \cap H$, whose dimension is $\Dim L - 1$, implying, by (P3), that $o \notin \Rint C'$ and $C'$ is not a subspace.

Any vector $x \in C'$ is the orthogonal projection of a suitable vector $p \in C$, which is expressible as $p = x + q$, with $q \in L^\bot$. Given a vector $v \in D$, one has
\[
x \! \cdot \! v = (p - q) \! \cdot \! v = p \! \cdot \! v + 0 \leqslant  0
\]
due to $D \subseteq C^\circ \cap L$. If $D^\circ_\fL$ denotes the polar cone of $D$ in the space $L$, then the above argument shows that $C'$ lies in $D^\circ_\fL$. Consequently, $\Cl C' \subseteq D^\circ_\fL$, which gives
\[
\{ o \} \subseteq \Lin D \cap \Lin (\Cl C') \subseteq D \cap \Cl C' \subseteq D \cap D^\circ_\fL = \{ o \}.
\]
So,
\[
\Lin D \cap \Lin (\Cl C') = D \cap \Cl C' = \{ o \}.
\]

By (P8), formulated for the space $L$, there is a subspace $N$ of $L$ of dimension $\Dim L - 1$ separating $D$ and $\Cl C'$ such that
\[
N \cap D = \Lin D \quad \text{and} \quad N \cap \Cl C' = \Lin (\Cl C').
\]
(Possibly, $N \ne H \cap L$.) Since neither $D$ nor $\Cl C'$ is a subspace, one has $D \ne \Lin D$ and $\Cl C' \ne \Lin (\Cl C')$, as follows from (P3). Consequently, $D \not\subseteq N$ and $\Cl C' \not\subseteq N$.

Denote by $Q$ and $Q'$ the closed halfplanes of $L$ which are determined by $N$ and contain the cones $D$ and $\Cl C'$, respectively. Let
\[
S = N + L^\bot, \quad P = Q + L^\bot, \quad P' = Q' + L^\bot.
\]
Clearly, $S$ is an $(n - 1)$-dimensional subspace containing $B$ (due to $B \subseteq L^\bot$), and $P$ and $P'$ are closed halfspaces determined by $S$ and containing $D$ and $\Cl C' + L^\bot$, respectively. By the above argument,
\[
C \subseteq C' + L^\bot \subseteq Q' + L^\bot = P'.
\]
Furthermore, neither $D$ nor $C$ lies in $S$. The latter argument implies the inclusions $\Rint D \subseteq \Int P$ and $\Rint C \subseteq \Int P'$, as desired.
\end{proof}

The following example illustrates Theorem~\ref{aux}. 

\begin{example} 
Let $C$ be a planar closed convex cone in $\R^3$, given by $C = \{ (x, y, 0) : x \geqslant 3 |y| \}$, and let $B = \{ (y, 3y, 0) : y \geqslant 0 \}$ be the boundary halfline of $C$. Then $C^\circ = \{ (x, y, z) : x \leqslant -|y|/3 \}$ and $D = \{ (-y/3, y, z) : y \geqslant 0 \}$ is the vertical closed halfplane bounded by the $z$-axis. 

Clearly, every 2-dimensional subspace through $B$ supports $C$, but only one of them, $S = \{ (y, 3y, z) : y \in \R \}$, separates $C$ and $D$. Furthermore, $\Rint C$ and $\Rint D$ lie in the opposite open halfspaces of\, $\R^3$ determined by $S$. One can easily see, that $S$ does not separate $C$ and $C^\circ$. 
\end{example} 

\section{Proof of Theorem~\ref{projections}}

Since the implication $(b) \Rightarrow (a)$ immediately follows from Theorem~\ref{moreau}, it suffices to show that $(a) \Rightarrow (b)$. First, we exclude the obvious case when $E = \{ o \}$ or $F = \{ o \}$. Indeed, suppose that $E = \{ o \}$. Then the condition $E + F = \R^n$ gives $F = \R^n$. Consequently, $E = F^\circ$ and $F = E^\circ$. The case $F = \{ o \}$ is similar.

So, we may assume that neither $E$ nor $F$ is $\{ o \}$. Our further argument is divided into a sequence of lemmas, where the sets $E$ and $F$ are assumed to satisfy assertion $(a)$ of the theorem.

\begin{lemma} \label{projections-1}
$o \in E \cap F$.
\end{lemma}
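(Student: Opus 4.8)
The plan is to argue by contradiction that the minimum-norm point of $E$ equals $o$, and then to exploit the symmetry of assertion $(a)$ in $E$ and $F$. Write $a = p_\fE(o)$ for the point of $E$ nearest to the origin and suppose $a \ne o$, so that $o \notin E$. Two elementary facts about $a$ will drive the argument: since $a$ is nearest to $o$, every $x \in E$ satisfies $\|x\| \ge \|a\|$; and since $o \notin E$, the halfspace characterization recalled in Section~\ref{1-intro} (applied with $u = o$) places $E$ inside $\{x : x \cdot a \ge \|a\|^2\}$.

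The next step is to show that the $E$-projection stays pinned at $a$ along the entire ray $\{ta : t \le 0\}$. For $t \le 0$ and any $x \in E$, a direct expansion gives $\|ta - x\|^2 - \|ta - a\|^2 = (\|x\|^2 - \|a\|^2) - 2t(a \cdot x - \|a\|^2)$, and both summands are nonnegative (the first by minimality of $\|a\|$, the second because $a \cdot x \ge \|a\|^2$ and $-2t \ge 0$). Hence $p_\fE(ta) = a$ for every $t \le 0$, and assertion $(a)$ forces $a \cdot p_\fF(ta) = 0$ for all such $t$; that is, the $F$-projections of the points $ta$ all lie in the hyperplane orthogonal to $a$.

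The contradiction then comes from $E + F = \R^n$. Writing $-a = x + y$ with $x \in E$ and $y \in F$, the halfspace bound on $E$ yields $a \cdot y = -\|a\|^2 - a \cdot x \le -2\|a\|^2 < 0$, so $F$ contains a point $y$ on the strictly negative side of that hyperplane. Comparing the nearest-point distance against $y$, namely $\|ta - p_\fF(ta)\|^2 \le \|ta - y\|^2$, and cancelling the common term $t^2\|a\|^2$ (legitimate precisely because $a \cdot p_\fF(ta) = 0$), I obtain $\|p_\fF(ta)\|^2 \le -2t(a \cdot y) + \|y\|^2$. Since $a \cdot y < 0$, choosing $t$ sufficiently negative makes the right-hand side negative while the left-hand side is nonnegative, an impossibility. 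Therefore $a = o$, i.e. $o \in E$; interchanging the roles of $E$ and $F$ gives $o \in F$, so $o \in E \cap F$.

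The main obstacle, and the crux of the proof, is this last step: recognizing that orthogonality of the projections confines every $p_\fF(ta)$ to a fixed hyperplane, while $E + F = \R^n$ forces $F$ to reach across that hyperplane, and that letting $t \to -\infty$ converts this tension into a violation of the minimality defining the metric projection. The earlier steps are routine once the constancy $p_\fE(ta) = a$ along the ray is in hand.
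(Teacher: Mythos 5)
Your proof is correct, and through its first half it tracks the paper's argument: both assume $o \notin E$, set $a = p_\fE(o)$ (the paper writes $e$), pin $p_\fE(ta) = a$ along the ray $\{ta : t \leqslant 0\}$ --- the paper via $p_\fW(te) = e$ for the supporting halfspace $W = \{x : x \cdot a \geqslant \|a\|^2\}$, you via an equivalent direct expansion --- and use assertion $(a)$ to confine $p_\fF(ta)$ to the hyperplane $\{a\}^\bot$. The endgames, however, genuinely differ. The paper next proves the stronger intermediate fact that $p_\fF(te)$ is \emph{constant}, equal to $c = p_\fF(o)$, by a double Pythagorean comparison together with uniqueness of nearest points; it then reads off the supporting halfspaces $V(t) = \{x : (x - c)\cdot(te - c) \leqslant 0\}$ containing $F$, lets $t \to -\infty$ to conclude $F \subseteq \{x : x \cdot e \geqslant 0\}$, and contradicts $E + F = \R^n$ through the containment $W + V = V$. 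You bypass all of that: you invoke $E + F = \R^n$ only once, at the start, to manufacture a single witness $y \in F$ with $a \cdot y \leqslant -2\|a\|^2 < 0$, and then the one-line comparison $\|ta - p_\fF(ta)\|^2 \leqslant \|ta - y\|^2$, with the cross term on the left annihilated by orthogonality, forces $0 \leqslant \|p_\fF(ta)\|^2 \leqslant -2t\,(a \cdot y) + \|y\|^2 \to -\infty$ as $t \to -\infty$. Your version is leaner --- no identification of $p_\fF(ta)$, no limiting family of supporting halfspaces --- whereas the paper's route yields the extra structural conclusion that $F$ lies in a fixed halfspace bounded by $\{a\}^\bot$, a halfspace picture it reuses in the later lemmas. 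Both arguments consume exactly the same hypotheses (orthogonality of the projections along the ray plus $E + F = \R^n$), and your closing appeal to the symmetry of assertion $(a)$ in $E$ and $F$ to get $o \in F$ matches the paper's ``assume, for instance'' convention.
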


\begin{proof}
Assume, for instance, that $o \notin E$. Let $e = p_\fE (o)$. Then $e \ne o$ and $E$ lies in the closed halfspace
\[
W = \{ x \in \R^n : (x - e) \! \cdot \! e \geqslant 0 \}.
\]
Put $c = p_\fF (o)$. By the hypothesis, $e \bot c$. Hence $c$ belongs to the $(n - 1)$-dimensional subspace
\[
S = \{ x \in \R^n : x \! \cdot \! e = 0 \}.
\]

Choose a scalar $t < 0$. Since $p_\fW (te) = e$, the inclusions $e \in E \subseteq W$ give $p_\fE (te) = e$. Then $p_\fF (te)$ belongs to $S$ due to the assumption $p_\fE (te)\, \bot\, p_\fF (te)$. Because $o$ is the orthogonal projection of $te$ on $S$, the Pythagorean theorem, used twice, gives
\begin{align*}
\| t e - p_\fF (te) \|^2 & = \| t e - o \|^2 + \| o - p_\fF (te) \|^2 \geqslant \| t e - o \|^2 + \| o - p_\fF (o) \|^2 \\
& = \| t e - p_\fF (o) \|^2 = \| t e - c \|^2.
\end{align*}
Hence
\[
\| t e - p_\fF (te) \| = \| te - c \|
\]
by the definition of $p_\fF (te)$. The uniqueness of metric projection of $te$ on $F$ implies that $p_\fF(te) = c$. Obviously, $te \ne c$. Therefore, $F$ lies in the closed halfspace
\begin{align*}
V(t) & = \{ x \in \R^n : (x - c) \! \cdot \! (te - c) \leqslant  0 \} \\
& = \{ x \in \R^n : (x - c) \! \cdot \! (e - c/t) \geqslant 0 \}
\end{align*}
as $t < 0$. Letting $t \to -\infty$, we conclude that $F$ lies in the closed halfspace
\[
V = \{ x \in \R^n : (x - c) \! \cdot \! e \geqslant 0 \} = \{ x \in \R^n : x \! \cdot \! e \geqslant 0 \}.
\]
Consequently,
\[
E + F \subseteq W + V = V,
\]
contrary to the assumption $E + F = \R^n$. Thus $o \in E \cap F$.
\end{proof}

\begin{lemma} \label{projections-2}
$E \subseteq F^\circ$ and\, $F \subseteq E^\circ$.
\end{lemma}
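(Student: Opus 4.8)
The plan is to prove the first inclusion $E \subseteq F^\circ$ directly and then obtain $F \subseteq E^\circ$ from the symmetry of the hypotheses. Fix an arbitrary vector $y \in E$; since $F^\circ = \{ x : x \! \cdot \! u \leqslant 0 \ \forall\, u \in F \}$, it suffices to show that $y \! \cdot \! u \leqslant 0$ for every $u \in F$. The starting observation is that $y \in E$ forces $p_\fE (y) = y$, because $y$ is its own nearest point of $E$. Hence assertion $(a)$ of Theorem~\ref{projections}, applied to $u = y$, gives the orthogonality $y \! \cdot \! p_\fF (y) = 0$.

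The key step is to show that this projection collapses to the origin, that is, $p_\fF (y) = o$. Write $c = p_\fF (y)$, so that $c \! \cdot \! y = 0$ by the previous step. If $y \in F$, then $c = y$, whence $\| y \|^2 = y \! \cdot \! c = 0$ and $y = o \in F^\circ$, so this case is immediate. Otherwise $y \notin F$, and the supporting-halfspace property of the metric projection recalled in Section~\ref{1-intro} places $F$ in $\{ x : (x - c) \! \cdot \! (y - c) \leqslant 0 \}$. Evaluating this inequality at the point $o \in F$, which is available from Lemma~\ref{projections-1}, yields $(o - c) \! \cdot \! (y - c) \leqslant 0$, that is $\| c \|^2 \leqslant c \! \cdot \! y = 0$; therefore $c = o$.

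Once $p_\fF (y) = o$ is established (in the remaining case $y \notin F$), the same supporting-halfspace property shows that $F$ lies in $\{ x : x \! \cdot \! y \leqslant 0 \}$, so $u \! \cdot \! y \leqslant 0$ for all $u \in F$ and hence $y \in F^\circ$. Since $y \in E$ was arbitrary, $E \subseteq F^\circ$. The hypotheses $E + F = \R^n$ and the orthogonality of $p_\fE (u)$ and $p_\fF (u)$, together with the conclusion $o \in E \cap F$ of Lemma~\ref{projections-1}, are all symmetric in $E$ and $F$, so interchanging their roles gives $F \subseteq E^\circ$. The only genuine obstacle is the collapse $p_\fF (y) = o$: this is the single place where Lemma~\ref{projections-1} enters and where the orthogonality relation is used to cancel the cross term $c \! \cdot \! y$, and the surrounding steps are routine applications of the projection inequality.
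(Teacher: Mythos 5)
Your proof is correct and is essentially the paper's own argument run in mirror image: the paper fixes a nonzero $u \in F$, uses $p_\fF(u) = u$, the orthogonality hypothesis, and $o \in E$ (Lemma~\ref{projections-1}) to force $p_\fE(u) = o$ via a Pythagorean nearest-point comparison, whereas you fix $y \in E$ and force $p_\fF(y) = o$ by evaluating the projection's variational inequality at $o \in F$. Both arguments then read the inclusion off the supporting-halfspace property of the metric projection, and the second inclusion follows by the same symmetry you invoke.
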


\begin{proof}
Choose a nonzero vector $u \in F$ (we already excluded the case $F = \{ o \}$). Since $u = p_\fF(u)$, the orthogonality of $p_\fE(u)$ and $p_\fF(u)$ shows that $p_\fE(u)$ belongs to the $(n - 1)$-dimensional subspace
\[
S = \{ x \in \R^n : x \! \cdot \! u = 0 \}.
\]

Obviously, $o$ is the orthogonal projection of $u$ on $S$. So,
\[
\| u - o \| \leqslant  \| u - p_\fE(u) \|.
\]
The inclusion $o \in E$ (see Lemma~\ref{projections-1}) and the uniqueness of $p_\fE(u)$ imply that $p_\fE(u)= o$. Therefore, $E$ lies in the closed halfspace
\[
V = \{ x \in \R^n : x \! \cdot \! u \leqslant  0 \}.
\]
Consequently,
\[
E \subseteq \{ x \in \R^n : x \! \cdot \! u \leqslant  0 \ \ \forall\, u \in F \} = F^\circ.
\]
The proof of the inclusion $F \subseteq E^\circ$ is similar.
\end{proof}

\begin{lemma} \label{projections-3}
If $o \in \Rint E \cup \Rint F$, then $E$ and $F$ are complementary orthogonal subspaces: $E = F^\bot$ and $F = E^\bot$.
\end{lemma}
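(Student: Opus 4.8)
The plan is to use the symmetry of the hypotheses in $E$ and $F$ to assume $o \in \Rint E$, and then to establish the conclusion in two stages: first I would show that $\Span E$ and $\Span F$ are orthogonal complementary subspaces, and afterwards that $E$ and $F$ coincide with their spans. The first and decisive step is the inclusion $E \subseteq F^\bot$. By Lemma~\ref{projections-2} we already have $E \subseteq F^\circ$, i.e.\ for every $u \in F$ the set $E$ lies in the closed halfspace $\{ x \in \R^n : x \cdot u \leqslant 0 \}$, whose bounding hyperplane passes through $o$. Fixing a nonzero $u \in F$ and invoking $o \in \Rint E$, the linear functional $x \mapsto x \cdot u$ is $\leqslant 0$ on $E$ and vanishes at the relative interior point $o$; a short argument (for $x \in E$ pick $\mu > 0$ with $-\mu x \in E$, so that $-\mu(x \cdot u) \leqslant 0$ forces $x \cdot u = 0$) shows it vanishes identically on $E$, giving $E \subseteq \{u\}^\bot$. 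Intersecting over all $u \in F$ yields $E \subseteq F^\bot$.

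Next I would combine this with the hypothesis $E + F = \R^n$. The inclusion $E \subseteq F^\bot$ gives $\Span E \subseteq (\Span F)^\bot$, while $E + F = \R^n$ gives $\Span E + \Span F = \R^n$. A dimension count then forces $\Dim E + \Dim F = n$, whence $\Span E = (\Span F)^\bot$ and $\Span F = (\Span E)^\bot$; that is, $\Span E$ and $\Span F$ are complementary orthogonal subspaces.

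It then remains to upgrade the spans to the sets themselves. For an arbitrary $u \in \R^n$ the orthogonal decomposition $u = u_1 + u_2$ with $u_1 \in \Span E$ and $u_2 \in \Span F$ is unique, since the two spans are orthogonal complements. On the other hand, $E + F = \R^n$ provides a representation $u = y + z$ with $y \in E \subseteq \Span E$ and $z \in F \subseteq \Span F$, and uniqueness forces $y = u_1$, $z = u_2$. As $u$ ranges over $\R^n$ its component $u_1$ ranges over all of $\Span E$, so $\Span E \subseteq E$ and therefore $E = \Span E = F^\bot$; by the symmetric argument $F = \Span F = E^\bot$, which is the desired assertion.

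I expect the main obstacle to be the inclusion $E \subseteq F^\bot$ in the first stage, because this is the only place where the hypothesis $o \in \Rint E$ is genuinely used: it is precisely what converts the one-sided supporting halfspaces supplied by Lemma~\ref{projections-2} into the two-sided (orthogonality) constraints that pin $E$ inside a subspace. Once the spans are known to be complementary and orthogonal the orthogonality of the projections becomes automatic, and filling $E$ and $F$ out to their entire spans is a short consequence of $E + F = \R^n$.
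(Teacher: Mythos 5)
Your proof is correct, and it reaches the conclusion by a genuinely different route than the paper, although both share the same two-stage skeleton: first make $E$ and $F$ mutually orthogonal, then use $E + F = \R^n$ to fill each set out to a full subspace. For the first stage the paper runs through its cone machinery --- by (P6) the hypothesis $o \in \Rint E$ makes $C = \Pos E$ a subspace, (P3) and (P5) yield $E^\circ = C^\bot$, and Lemma~\ref{projections-2} then places $F$ inside $C^\bot = E^\bot$ --- whereas you reprove exactly the fragment needed by hand: the functional $x \mapsto x \cdot u$ with $u \in F$ is nonpositive on $E$ by Lemma~\ref{projections-2} and vanishes at the relative interior point $o$, so your $-\mu x$ trick (legitimate because $o \in E$ gives $\Aff E = \Span E$) forces it to vanish identically, i.e.\ $E \subseteq F^\bot$. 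For the second stage the paper argues by contradiction twice: if $E \ne C$, then $E$ lies in a closed halfplane $P$ of $C$ and $E + F \subseteq P + C^\bot$ is a proper halfspace, contradicting $E + F = \R^n$, and similarly for $F \ne C^\bot$; you instead count dimensions ($\Span E \subseteq (\Span F)^\bot$ together with $\Span E + \Span F = \R^n$ makes the spans orthogonal complements) and then exploit uniqueness of the direct-sum decomposition: each $w \in \Span E$ writes as $w = y + z$ with $y \in E \subseteq \Span E$ and $z \in F \subseteq \Span F$, and uniqueness forces $z = o$, so $w = y \in E$. Your route is more elementary and self-contained, avoiding (P3), (P5), (P6) and the existence of supporting halfplanes; the paper's halfspace-sum contradiction has the advantage of being the same device reused in Lemmas~\ref{projections-4} and~\ref{projections-5}, keeping the proof of Theorem~\ref{projections} stylistically uniform. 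A small bonus of your approach: the single decomposition argument simultaneously gives $\Span E \subseteq E$ and $\Span F \subseteq F$, so the closing appeal to symmetry is not even needed.
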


\begin{proof}
Let, for instance, $o \in \Rint E$. By (P6), the cone $C = \Pos E$ is a subspace. A combination of (P3) and (P5) gives $E^\circ = C^\circ = C^\bot$, and Lemma~\ref{projections-2} shows that $F$ lies in the subspace $C^\bot$.

We assert that $E = C$. Indeed, assume for a moment that $E \ne C$. Then there is a closed halfplane $P$ of $C$ which contains $E$. Let
\[
P = \{ x \in C : x \! \cdot \! e \leqslant  \gamma \},
\]
where $e$ is a suitable nonzero vector in $C$ and $\gamma$ is a scalar. Then the sum $P + C^\bot$ coincides with the closed halfspace
\[
V = \{ x \in \R^n : x \! \cdot \! e \leqslant  \gamma \}.
\]
Consequently,
\[
E + F \subseteq P + C^\bot = V,
\]
contrary to the hypothesis $E + F = \R^n$. So, $E = C$, implying that $E$ is a subspace.

In a similar way, assuming that $F \ne C^\bot$, we would find a closed halfplane $Q$ of $C^\bot$ which contains $F$, resulting in the inclusion
\[
E + F \subseteq C + Q \ne C + C^\bot = \R^n.
\]
Hence $F$ should coincide with $C^\bot$, as desired.
\end{proof}

A combination of (P3) and Lemma~\ref{projections-3} implies the following corollary.

\begin{corollary} \label{cor-1}
If at least one of the sets $E$ and $F$ is a subspace, then $E$ and $F$ are complementary orthogonal subspaces.
\end{corollary}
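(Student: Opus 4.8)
The plan is to read off the corollary from the equivalence "subspace $\Leftrightarrow$ $o$ in the relative interior'' recorded in (P3), and then to hand the rest of the work to Lemma~\ref{projections-3}. Since the roles of $E$ and $F$ in assertion $(a)$ of Theorem~\ref{projections} are symmetric, I would first reduce to the case in which $E$ is the subspace; the case where $F$ is a subspace follows verbatim after interchanging the two sets.

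Assuming $E$ is a subspace, the key observation is that $E$ is in particular a closed convex cone satisfying $E = \Lin E$, which is condition (i) of (P3). Hence (P3) furnishes the equivalent condition (ii), namely $o \in \Rint E$. Consequently $o \in \Rint E \cup \Rint F$, which is exactly the hypothesis of Lemma~\ref{projections-3}; that lemma then yields $E = F^\bot$ and $F = E^\bot$, so $E$ and $F$ are complementary orthogonal subspaces, as claimed.

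I do not anticipate any genuine obstacle: the entire content of the corollary is the translation of "$E$ is a subspace'' into "$o \in \Rint E$'' through (P3), after which Lemma~\ref{projections-3} supplies the conclusion. The only points deserving a moment's attention are the symmetry reduction, so that the single instance $o \in \Rint E$ already covers whichever of the two sets happens to be a subspace, and the trivial remark that a subspace is indeed a closed convex cone, so that (P3) is applicable to it.
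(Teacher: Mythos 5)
Your proof is correct and takes exactly the paper's route: the paper derives this corollary as an immediate ``combination of (P3) and Lemma~\ref{projections-3},'' which is precisely your argument. You merely make explicit the two details the paper leaves tacit --- that a subspace is a closed convex cone to which (P3) applies, translating ``$E$ (or $F$) is a subspace'' into $o \in \Rint E \cup \Rint F$, and the symmetry of that hypothesis in $E$ and $F$.
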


\begin{lemma} \label{projections-4}
$E \not\subseteq \Rbd F^\circ$ and $F \not\subseteq \Rbd E^\circ$.
\end{lemma}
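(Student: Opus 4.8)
The plan is to argue by contradiction, and to prove only $E\not\subseteq\Rbd F^\circ$; the assertion $F\not\subseteq\Rbd E^\circ$ then follows by interchanging the roles of $E$ and $F$, since both conditions in assertion $(a)$ are symmetric in the two sets. So suppose $E\subseteq\Rbd F^\circ$. First I note that $F^\circ$ is not a subspace: otherwise $\Rbd F^\circ=\varnothing$, which is impossible because $E\ne\varnothing$. Since $E$ is a convex subset of $\Rbd F^\circ$ and $F^\circ$ is a closed convex cone that is not a subspace, the supporting-hyperplane argument already used in the proof of Theorem~\ref{aux} (see \cite{sol15}, Corollary~6.9, together with (P7), which forces the support to pass through $o$) yields a hyperplane $H=\{x\in\R^n:x\cdot e=0\}$, with $e\ne o$, such that $E\subseteq H$, $F^\circ\subseteq\{x\in\R^n:x\cdot e\leqslant 0\}$, and $F^\circ\not\subseteq H$ (proper support).

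Next I would record three structural consequences. Because $E\subseteq H=e^\bot$, every $a\in E$ satisfies $a\bot e$, and the one-line computation used in Lemma~\ref{projections-1} gives $p_\fE(a-te)=a$ for all $a\in E$ and all $t>0$. By Lemma~\ref{projections-2} we have $E\subseteq F^\circ$, so $f\cdot a\leqslant 0$ for all $f\in F$ and $a\in E$; hence $F\subseteq\{x:x\cdot a\leqslant 0\}$ and $a^\bot$ supports $F$ at $o$. Finally, projecting the equality $E+F=\R^n$ onto the line $\R e$ and using $E\subseteq e^\bot$ gives $\{f\cdot e:f\in F\}=\R$; in particular $f\cdot e$ is unbounded below on $F$, so the recession cone $R=\mathrm{rec}\,F$ contains a direction $d$ with $e\cdot d<0$.

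The contradiction would then be extracted by a projection/recession argument. Fix $a\in E$ and set $z_t=p_\fF(a-te)$ for $t>0$. Assertion $(a)$ together with $p_\fE(a-te)=a$ forces $z_t\cdot a=0$. Applying the projection inequality to the vectors $z_t+\lambda d\in F$ ($\lambda\geqslant 0$) gives $(a-te-z_t)\cdot d\leqslant 0$, whence $z_t\cdot d\geqslant a\cdot d+t\,|e\cdot d|\to+\infty$; therefore $\|z_t\|\to\infty$ and, by the standard asymptotics of metric projections onto a closed convex set containing $o$, the ratio $z_t/t$ converges to $g:=p_R(-e)$, the metric projection of $-e$ on the recession cone $R$. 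The cone-projection identity gives $g\cdot e=-\|g\|^2$, so $g\ne o$ and $g\cdot e<0$; passing to the limit in $z_t\cdot a=0$ yields $g\cdot a=0$, and since $a\in E$ was arbitrary, $g\in E^\bot$. \emph{The main obstacle is precisely to contradict the resulting configuration} $g\in R\cap E^\bot$ with $g\cdot e<0$: everything above is routine, but this last reconciliation is the crux. I expect to rule it out using the properness $F^\circ\not\subseteq H$ in combination with (P2) and (P4). Indeed $g\in R\subseteq(F^\circ)^\circ$, and $g\cdot e<0$ should place $g$ outside $\Lin(F^\circ)^\circ$; feeding such a $g$ into (P4), applied to the cone $F^\circ$, ought to show that a vector of $E$ orthogonal to $g$ cannot lie in $\Rbd F^\circ$ once $F^\circ\not\subseteq H$ is invoked, so that some $a\in E$ must in fact satisfy $g\cdot a<0$, contradicting $g\in E^\bot$. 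Making this final step clean, rather than merely asymptotic, is where the real work lies.
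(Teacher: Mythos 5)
Your proposal is not a proof: it has two genuine gaps, and each is at a load-bearing point. The first is the step ``$\{f\cdot e : f\in F\}=\R$, so $\mathrm{rec}\,F$ contains $d$ with $e\cdot d<0$.'' That inference is false for closed convex sets that are not cones: for $F=\{(x,y)\in\R^2 : y\geqslant x^2\}$ and $e=(1,0)$ the functional $x\mapsto x\cdot e$ is unbounded below on $F$, yet every $d\in\mathrm{rec}\,F=\{(0,t):t\geqslant 0\}$ satisfies $e\cdot d=0$. At this stage of the lemma chain $F$ is \emph{not} yet known to be a cone --- that is precisely what Lemma~\ref{projections-5} later extracts with the help of the present lemma --- so you may not argue as if $\mathrm{rec}\,F=F$. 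Unboundedness only yields $d\in\mathrm{rec}\,F$ with $e\cdot d\leqslant 0$; if $-e$ happens to lie in $(\mathrm{rec}\,F)^\circ$, then $g$, the metric projection of $-e$ on $\mathrm{rec}\,F$, equals $o$, the limit $z_t/t\to o$ carries no information, and the entire asymptotic apparatus (whose Zarantonello-type limit I am granting you in $\R^n$) produces nothing.

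The second gap is the one you flag yourself, and the plan you sketch for closing it cannot work. (P4), applied to $C=F^\circ$, pairs a vector of $\Rint F^\circ$ against a vector of $(F^\circ)^\circ\setminus\Lin (F^\circ)^\circ$; but under your contradiction hypothesis $E\subseteq\Rbd F^\circ$, the set $E$ contains no vector of $\Rint F^\circ$, so there is nothing in $E$ to feed into (P4). Worse, the terminal configuration ``$g\in (F^\circ)^\circ\setminus\Lin(F^\circ)^\circ$, $g\in E^\bot$, $g\cdot e<0$'' is not self-contradictory: already when $E$ is a halfline $B\subseteq\Rbd F^\circ$, Theorem~\ref{aux}$(b)$ (with $C=F^\circ$) \emph{constructs} exactly such a vector, namely a nonzero element of $D=(F^\circ)^\circ\cap B^\bot$ with $D\not\subseteq\Lin(F^\circ)^\circ$; so no purely positional reasoning about $g$ can finish, and even your subsidiary claim that $g\cdot e<0$ forces $g\notin\Lin(F^\circ)^\circ$ would require $e\in\Span F^\circ$, which your choice of supporting hyperplane does not provide. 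This is exactly why the paper does not take a generic supporting hyperplane through $E$: it invokes Theorem~\ref{aux}$(c)$ to obtain the special $(n-1)$-dimensional subspace $S\supseteq B$ that simultaneously separates $F^\circ$ from $D=(F^\circ)^\circ\cap\{y\}^\bot$ (the Example after Theorem~\ref{aux} shows that a generic supporting hyperplane through $B$ fails to do this), probes with $u=y+t_0e$ aimed at a vector $v\in F$ strictly on the $F^\circ$ side, and concludes quantitatively from $\|u-p_\fF(u)\|\leqslant\|u-v\|<\|u\|\leqslant\|u-x\|$ for all $x\in D$, while orthogonality forces $p_\fF(u)\in F\cap\{y\}^\bot\subseteq D$. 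Your routine opening moves (the symmetric reduction, $F^\circ$ not a subspace, $p_\fE(a-te)=a$, $z_t\cdot a=0$) do match the paper's, but without a substitute for this separation-plus-distance comparison the proposal does not prove the lemma.
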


\begin{proof}
Assume, for contradiction, that $E \subseteq \Rbd F^\circ$. Then $F^\circ$ is not a subspace (otherwise $\Rbd F^\circ = \varnothing$), and a combination of (P3) and Corollary~\ref{cor-1} shows that none of the sets $E, F, E^\circ$, and $F^\circ$ is a subspace.

First, we contend that $\Rint E \not\subseteq \Lin F^\circ$. Indeed, let $\Rint E \subseteq \Lin F^\circ$. Then $E = \Cl (\Rint E) \subseteq \Lin F^\circ$. Obviously, both subspaces $\Lin F^\circ$ and $\Span F = (\Lin F^\circ)^\bot$ are nontrivial. Since $F \ne \Span F$ (otherwise $F$ is a subspace), we have
\[
E + F \subseteq \Lin F^\circ + F \ne \Lin F^\circ + \Span F = \Lin F^\circ + (\Lin F^\circ)^\bot = \R^n,
\]
contrary to the assumption $E + F = \R^n$. So, $\Rint E \not\subseteq \Lin F^\circ$.

Choose a nonzero vector $y \in \Rint E \setminus \Lin F^\circ$ and consider the closed halfline $B = \{ \lambda y : \lambda \geqslant 0 \}$. Clearly, $B \subseteq \Rbd F^\circ$ and $B \not\subseteq \Lin F^\circ$ due to the choice of $y$ in $\Rbd F^\circ \setminus \Lin F^\circ$.

By Theorem~\ref{aux} (applied to the case $C = F^\circ$), there is an $(n - 1)$-dimen\-sional subspace $S \subseteq \R^n$ which contains $B$ and supports $F^\circ$ such that the relative interiors of the closed convex cones $F^\circ$ and
\[
D = \{ x \in (F^\circ)^\circ : x \! \cdot \! b = 0 \ \ \forall\, b \in B \} = \{ x \in (F^\circ)^\circ : x \! \cdot \! y = 0 \}
\]
lie in the opposite open halfspaces of\, $\R^n$ determined by $S$. Since $y \in \Rint E$ and $S$ supports $E$, we have $E \subseteq S$. Express $S$ as
\[
S = \{ x \in \R^n : x \! \cdot \! e = 0 \},
\]
where $e \ne o$ is a suitable vector. Without loss of generality, we may assume that $\Rint F^\circ$ lies in the open halfspace
\[
W = \{ x \in \R^n : x \! \cdot \! e > 0 \}
\]
(otherwise replace $e$ with $-e$). Clearly, $D$ lies in the closed halfspace $V = \R^n \setminus W$. If $V$ contained $F$, then
\[
E + F \subseteq S + V = V \ne \R^n,
\]
which is impossible by the assumption $E + F = \R^n$. Hence there is a nonzero vector $v \in F \cap W$. Clearly, $v \! \cdot \! e > 0$ due to $v \in W$, and $v \! \cdot \! y \leqslant 0$ due to the inclusions $v \in F$ and $y \in E \subseteq F^\circ$ .

Denote by $h$ the closed halfline $\{ y + t e : t \geqslant 0 \}$. The hyperplane
\[
H = \{ x \in \R^n : (x - v) \! \cdot \! v = 0 \}
\]
meets $h$ at the vector $y + t_0 e$, where
\[
t_0 = \frac{v \! \cdot \! v - y \! \cdot \! v}{e \! \cdot \! v} > 0
\]
(indeed, $(y + t_0 e - v) \! \cdot \! v = 0$). Put $u = y + t_0 e$. Since the halfline $h$ is normal to $S$, and since $y \in E \subseteq S$, we have $p_\fE (u) = y$. The orthogonality of $u - v$ and $v$ and the Pythagorean theorem give
\[
\| u - v \|^2 + \| v \|^2 = \| u \|^2,
\]
which results in the inequality $\| u - v \| < \| u \|$. Since $v \in F$ and $p_\fF (u)$ is the vector in $F$ nearest to $u$, we have
\begin{equation} \label{1}
\| u - p_\fF (u) \| \leqslant \| u - v \| < \| u \|.
\end{equation}

Next, we are going to prove that $\| u - x \| \geqslant \| u \|$ whenever $x \in D$. Indeed, let $x \in D$. Then $x \in V$ (and $y \in S = \{ e \}^\bot$), which gives
\[
(x - y) \! \cdot \! (u - y) = t_0 (x - y) \! \cdot \! e = t_0\, x \! \cdot \! e + 0 \leqslant 0.
\]
Hence
\begin{align*}
\| x - u \|^2 & = \| x - y \|^2 + \| y - u \|^2 + 2 (x - y) \! \cdot \! (u - y) \\
& \geqslant \| x - y \|^2 + \| y - u \|^2 \\
& = \| x \|^2 + \| y \|^2 + \| y - u \|^2 & (\text{as} \ x \! \cdot \! y = 0) \\
& = \| x \|^2 + \| u \|^2 & (\text{as} \ (u - y) \! \cdot \! y = 0) \\
& \geqslant \| u \|^2.
\end{align*}

Combining this argument with \eqref{1}, we conclude that $p_\fF (u) \notin D$. On the other hand, the vector $p_\fF (u)$ must be orthogonal to $y = p_\fE (u)$ by assumption $(a)$.
Consequently, $p_\fF (u)$ should satisfy the inclusion
\[
p_\fF (u) \in F \cap \{ y \}^\bot \subseteq (F^\circ)^\circ \cap \{ y \}^\bot = D.
\]
The obtained contradiction implies that the inclusion $E \subseteq \Rbd F^\circ$ does not hold. Similarly, $F \not\subseteq \Rbd E^\circ$.
\end{proof}

\begin{lemma} \label{projections-5}
$E$ and $F$ are polar cones of each other.
\end{lemma}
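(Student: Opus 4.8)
The plan is to promote the two inclusions of Lemma~\ref{projections-2} to equalities. By the definition given in the introduction, ``$E$ and $F$ are polar cones of each other'' means precisely $E=F^\circ$ and $F=E^\circ$; since Lemma~\ref{projections-2} already supplies $E\subseteq F^\circ$ and $F\subseteq E^\circ$, it suffices to establish the reverse inclusions $F^\circ\subseteq E$ and $E^\circ\subseteq F$, and by the symmetry of hypothesis~$(a)$ in $E$ and $F$ it is enough to prove one of them, say $F^\circ\subseteq E$. First I would dispose of the degenerate case: if either $E$ or $F$ is a subspace, then Corollary~\ref{cor-1} already yields that $E$ and $F$ are complementary orthogonal subspaces, hence polar cones of each other. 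So I may assume that neither $E$ nor $F$ is a subspace; by (P3) none of $E,F,E^\circ,F^\circ$ is a subspace either, so all the relative-interior and lineality machinery invoked in Lemma~\ref{projections-4} is available.

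Under this reduction I would argue by contradiction, in the spirit of Lemma~\ref{projections-4}, by manufacturing a vector $u$ whose two metric projections are nonzero yet fail to be orthogonal. Suppose $w\in F^\circ\setminus E$ and put $p=p_\fE(w)$, $d=w-p\ne o$, so that $E$ lies in the halfspace $\{x:d\cdot(x-p)\le 0\}$ and, testing $o\in E$, $d\cdot p\ge 0$. Along the outward normal ray $u_s=p+sd$ $(s\ge 0)$ the nearest point of this halfspace, hence of $E$, is $p$; thus $p_\fE(u_s)=p$ for all $s\ge 0$. The orthogonality hypothesis then forces $p\cdot p_\fF(u_s)=0$, so $p_\fF(u_s)$ is confined to the exposed face $F\cap\{p\}^\bot$ of $F$ (the hyperplane $\{x:x\cdot p=0\}$ supports $F$ because $F\subseteq E^\circ$). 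The idea is that, for a suitable choice of the parameter $s$ (exactly as $t_0$ is tuned in Lemma~\ref{projections-4}), a Pythagorean estimate on $\|u_s-\cdot\|$ shows that the genuine nearest point of $F$ to $u_s$ lies strictly off $\{p\}^\bot$ whenever $d\notin F^\circ$, which contradicts the confinement just obtained; and the case $d\in F^\circ$ would be ruled out separately using that $F^\circ$ is not a subspace. Running the symmetric construction gives $E^\circ\subseteq F$, and the four inclusions combine into $E=F^\circ$ and $F=E^\circ$, as desired.

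The main obstacle, as in Lemma~\ref{projections-4}, is precisely this construction: producing a single point $u$ that is simultaneously pinned by the orthogonality of its projections (forcing $p_\fF(u)$ into a lower-dimensional face) and by a distance comparison (showing the true projection escapes that face). The delicate bookkeeping is to guarantee, via $E+F=\R^n$ together with the non-subspace reduction, that the direction $d$ is not itself polar to $F$ and that the relevant exposed faces do not collapse to the lineality spaces, so that the two estimates genuinely conflict. An alternative route would be to prove the stronger decomposition $u=p_\fE(u)+p_\fF(u)$ for every $u$ and then quote Theorem~\ref{decomposition}; however, the inclusions of Lemma~\ref{projections-2} alone do not force this equality (the sign information they provide points the wrong way in the natural variational estimate), so I expect the orthogonality hypothesis to be indispensable at exactly the contradiction step above rather than recoverable by a soft reduction.
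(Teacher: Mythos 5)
Your route is correct and genuinely different from the paper's, and it is substantially more elementary. The paper proves $E = F^\circ$ by contradiction through a chain of machinery: it needs Lemma~\ref{projections-4} ($E \not\subseteq \Rbd F^\circ$, which itself rests on Theorem~\ref{aux} and the separation result (P8)), then a supporting halfplane of $\Span F^\circ$ at a point $y \in E \cap \Rint F^\circ$ (with a case analysis on $\Dim E$ versus $\Dim F^\circ$), a limiting family of halfspaces $W(t)$, and finally the strict-inequality criterion (P4) to defeat orthogonality. Your argument starts from a putative $w \in F^\circ \setminus E$ and uses only Lemmas~\ref{projections-1}--\ref{projections-2}, the variational characterization of metric projections, and hypothesis $(a)$; moreover, the step you flagged as the main obstacle does close, with no tuning beyond taking $s$ large. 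Writing $p = p_\fE(w)$, $d = w - p \ne o$, $u_s = p + sd$, one has $p_\fE(u_s) = p$ for all $s \geqslant 0$, so orthogonality pins $g_s := p_\fF(u_s)$ to $g_s \cdot p = 0$, whence
\[
g_s \cdot u_s = g_s \cdot p + s\, g_s \cdot d = s\, g_s \cdot w \leqslant 0 \quad (g_s \in F, \ w \in F^\circ),
\]
so $\| u_s - g_s \| \geqslant \| u_s \|$; on the other hand, if $d \notin F^\circ$, choose $v \in F$ with $d \cdot v > 0$ and then $s$ with $u_s \cdot v = p \cdot v + s\, d \cdot v > 0$, so that $\| u_s - tv \| < \| u_s \|$ for small $t \in (0, 1]$ (here $tv \in F$ because $o \in F$ by Lemma~\ref{projections-1}), a contradiction. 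Thus the two estimates conflict exactly as you predicted, and none of the face/lineality bookkeeping you worried about is actually needed. One correction: the residual case $d \in F^\circ$ is killed by $E + F = \R^n$ alone, not by $F^\circ$ failing to be a subspace: $d \in F^\circ$ gives $F \subseteq \{ x : d \cdot x \leqslant 0 \}$, while the projection inequality gives $E \subseteq \{ x : d \cdot x \leqslant d \cdot p \}$, so $E + F$ lies in a proper halfspace. (This branch also absorbs the degenerate possibility $p = o$, where your supporting hyperplane $\{ p \}^\bot$ collapses, since $p = o$ forces $d = w \in F^\circ$.) For the same reason, your preliminary reduction via Corollary~\ref{cor-1} and (P3) to the non-subspace case is superfluous: the two-branch argument is uniform. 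What the paper's heavier route buys is Theorem~\ref{aux} itself, a separation result of independent interest; your argument, once written out, bypasses Lemma~\ref{projections-4} and Theorem~\ref{aux} entirely for the purposes of this lemma, and is the shorter proof.
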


\begin{proof}
If $o \in \Rint E \cup \Rint F$, then Lemma~\ref{projections-3} shows that $E$ and $F$ are complementary orthogonal subspaces, polar cones of each other. So, we may assume that $o \notin \Rint E \cup \Rint F$. By (P6), none of the cones $\Pos E$ and $\Pos F$ is a subspace, and a combination (P3) and (P5) shows that none of $E^\circ$ and $F^\circ$ is a subspace.

We will prove the equality $E = F^\circ$ (the case $F = E^\circ$ is similar). Assume, for contradiction, that $E \ne F^\circ$. By Lemmas~\ref{projections-2} and~\ref{projections-4}, $E \subseteq F^\circ$ and $E \not\subseteq \Rbd F^\circ$. Therefore, $E$ meets $\Rint F^\circ$. Let $L = \Span F^\circ$. We are going to prove the following auxiliary assertion:

\begin{enumerate}
\item[$(c)$] there is a closed halfplane $P$ of $L$ which contains $E$ such that the boundary plane of $P$ supports $E$ at a nonzero vector from $\Rint F^\circ$.
\end{enumerate}

To prove $(c)$, we consider separately the cases $\Dim E = \Dim F^\circ$ and $\Dim E < \Dim F^\circ$.

1. Let $\Dim E = \Dim F^\circ$. Then $\Span E = L$ and $\Rint E \subseteq \Rint F^\circ$ (see \cite{sol15}, Theorem~2.28). Furthermore, $\Rint F^\circ \not\subseteq E$, since otherwise $F^\circ = \Cl (\Rint F^\circ) \subseteq E$. Choose vectors $x \in \Rint F^\circ \setminus E$ and $z \in \Rint E$. Then the open segment $(x, z)$ meets $\Rbd E$ at a vector $y$, say (see \cite{sol15}, Theorem~2.55). The inclusions $x \in \Rint F^\circ$ and $z \in E \subseteq F^\circ$ imply that $y \in (x, z) \subseteq \Rint F^\circ$. So, $y \in \Rbd E \cap \Rint F^\circ$. One has $y \ne o$ because $o \notin \Rint F^\circ$ due to (P3). Choose a closed halfplane $P$ of $L$ which contains $E$ such that the boundary plane of $P$ supports $E$ at $y$.

2. Let $\Dim E < \Dim F^\circ$. Since both sets $E$ and $F^\circ$ contain $o$, the subspace $M = \Span E$ is a proper subset of $L$. By (P2), the subspaces $\Lin (F^\circ)^\circ$ and $L$ are orthogonal complements of each other. Consequently, $\Lin (F^\circ)^\circ + M$ is a proper subspace of $\R^n$. Choose an $(n - 1)$-dimensional subspace $H$ which contains $\Lin (F^\circ)^\circ + M$ and let $N = H \cap L$. Then
\[
\Dim N = \Dim H + \Dim L - \Dim (H + L) = (n - 1) + \Dim L - n = \Dim L - 1.
\]
Now, assertion $(c)$ holds for any choice of a nonzero vector $y \in E \cap \Rint F^\circ$ and of a closed halfplane $P$ of $L$ determined by $N$.

\medskip

We can describe the halfplane $P$ from $(c)$ as
\[
P = \{ x \in L : x \! \cdot \! e \leqslant \gamma \},
\]
where $e$ is a nonzero vector in $L$ and $\gamma = y \! \cdot \! e$. Obviously, $\gamma \geqslant 0$ due to $o \in E \subseteq P$ (see Lemma~\ref{projections-1}). Consider the closed halfspace
\[
V = P + L^\bot = \{ x \in \R^n : x \! \cdot \! e \leqslant \gamma \}.
\]
Then $E \subseteq P \subseteq V$ and the boundary hyperplane of $V$ supports $E$ at $y$.

Next, we contend that $F$  does not lie in the closed halfspace
\[
W = \{ x \in \R^n : x \! \cdot \! e \leqslant 0 \}.
\]
Indeed, if $F \subseteq W$, then $E + F \subseteq V + W = V$, contrary to the assumption $E + F = \R^n$.

This argument implies that $e \notin F^\circ$; indeed, otherwise
\[
F \subseteq (F^\circ)^\circ \subseteq \{ e \}^\circ = W.
\]
Consequently, the closed halfline $h = \{ y + t e : t \geqslant 0 \}$ lies in $L$ but not in $F^\circ$. By a convexity argument, there is a scalar $t_0 > 0$ such that the halfline $h_0 = \{ y + t e : t \geqslant t_0 \}$ is disjoint from $F^\circ$.

Since the halfline $h$ is normal to the boundary hyperplane of $V$ and $h \cap V = \{ y \}$, for every vector $u \in h_0$, we have $p_\fV (u) = y$, and hence $p_\fE (u) = y$ as $y \in E \subseteq V$. Also, the inclusion $E \subseteq F^\circ$ implies that $p_\fE (u) \in F^\circ$.

We contend the existence of a vector $u' \in h_0$ with the property $p_\fF (u') \notin \Lin (F^\circ)^\circ$. Indeed, assume for a moment that $p_\fF (u) \in \Lin (F^\circ)^\circ$ for all $u \in h_0$. By (P2), the subspaces $\Lin (F^\circ)^\circ$ and $L$ are orthogonal. Therefore, $u\, \bot\, p_\fF (u)$ whenever $u \in h_0$. Pythagorean's theorem gives
\[
\| u - p_\fF (u) \|^2 = \| u - o \|^2 + \| o -  p_\fF (u) \|^2 \geqslant \| u - o \|^2.
\]
Because $o \in F$ (see Lemma~\ref{projections-1}), the uniqueness of $p_\fF (u)$ gives $p_\fF (u) = o$. Consequently, writing $u$ as $y + t e$, $t \geqslant t_0$, we see that $F$ lies in every closed halfspace
\[
W(t) = \{ x \in \R^n : x \! \cdot \! (y + te) \leqslant 0 \} = \{ x \in \R^n : x \! \cdot \! (e + y/t) \leqslant 0 \}, \quad t \geqslant t_0 > 0.
\]
Letting $t \to \infty$, we obtain that $F$ lies in $W$, which is impossible by the above argument.
Hence, there is a vector $u' \in h_0$ satisfying the condition $p_\fF (u') \in F \setminus \Lin (F^\circ)^\circ$.

Finally, (P4) implies that the vectors
\[
p_\fE (u') = y \in \Rint F^\circ \quad \text{and} \quad p_\fF (u') \in F \setminus \Lin (F^\circ)^\circ \subseteq (F^\circ)^\circ \setminus \Lin (F^\circ)^\circ
\]
cannot be orthogonal, contrary to assertion $(a)$ of  Theorem~\ref{projections}. The obtained contradiction shows that $E = F^\circ$.
\end{proof}

\section{Proof of Theorem~\ref{orthogonal}}

Since the implication $(b) \Rightarrow (a)$ immediately follows from Theorem~\ref{moreau}, it suffices to show that $(a) \Rightarrow (b)$. Clearly, $E + F = \R^n$ due to condition $(a)$.

We may exclude the obvious cases when any of $E$ and $F$ is $\{ o \}$ or $\R^n$. Indeed, suppose that $E = \{ o \}$. Choose a vector $u \in \R^n$. By the assumption, $u$ is uniquely expressible as $u = y + z$, where $y \in E$ and $z \in F$. Since $y = o$, we have $u = z \in F$. So, $F = \R^n = \{ o \}^\circ = E^\circ$, as desired. The case $E = \R^n$ is similar.

We also observe that $E \cap F = \{ o \}$. Indeed, by condition $(a)$, we can write $o = y + z$ for suitable orthogonal vectors $y \in E$ and $z \in F$. Then $0 = \| o \|^2 = \| y \|^2 + \| z \|^2$, which gives $o = y = z \in E \cap F$. If there were a nonzero vector $u \in E \cap F$, then $u = u + o$ and $u = o + u$ would be two different representations of $u$ as a sum of orthogonal vectors from $E$ and $F$, respectively. So, $E \cap F = \{ o \}$.

We contend that both sets $E$ and $F$ are cones. Indeed, choose any vector $u \in E$ and a scalar $\lambda > 0$. If $0 \leqslant \lambda \leqslant 1$, then $\lambda u \in [o, u] \subseteq E$ due to the convexity of $E$. Let $\lambda > 1$. By condition $(a)$, there are orthogonal vectors $y \in E$ and $z \in F$ such that $\lambda u = y + z$. Thus $u = \lambda^{-1} y + \lambda^{-1} z$ is the sum of orthogonal vectors $\lambda^{-1} y$ and $\lambda^{-1} z$. Because $0 < \lambda^{-1} < 1$, one has $\lambda^{-1} y \in [o, y] \subseteq E$ and $\lambda^{-1} z \in [o, z] \subseteq F$ due to the convexity of $E$ and $F$. On the other hand, $u = u + o$ is the sum of orthogonal vectors $u \in E$ and $o \in F$. The uniqueness of such expression gives $u = \lambda^{-1} y$. Consequently, $\lambda u = y \in E$. Hence $E$ is a cone.

Next, we assert that $E^\circ \subseteq F$ and $F^\circ \subseteq E$. Indeed, choose any nonzero vector $u \in E^\circ$ ($E^\circ \ne \{ o \}$ because $E \ne \R^n$). By condition $(a)$, $u = y + z$, where $y \in E$ and $z \in F$ are orthogonal vectors. Then
\[
u \! \cdot \! y = (y + z) \! \cdot \! y = y \! \cdot \! y + z \! \cdot \! y = \| y \|^2 + 0 \geqslant 0,
\]
with $u \! \cdot \! y = 0$ if and only if $y = o$. On the other hand $u \! \cdot \! y \leqslant 0$ due to $y \in E$ and $u \in E^\circ$. Hence $u \! \cdot \! y$, implying $y = o$. Thus  $u = o + z = z \in F$. So, $E^\circ \subseteq F$. In a similar way, $F^\circ \subseteq E$.

Finally, we contend that $F \subseteq E^\circ$ and $E \subseteq F^\circ$. For this, choose any vector $u \in F$. By Theorem~\ref{moreau}, $u = y + z$, where $y \in E$ and $z \in E^\circ$ are orthogonal vectors. The above proved inclusion $E^\circ \subseteq F$ gives $z \in F$. On the other hand $u = o + u$ is the sum of orthogonal vectors $o \in E$ and $u \in F$. Condition $(a)$ implies that these representation are identical: $y = o$ and $u = z \in E^\circ$. So, $F \subseteq E^\circ$. Similarly, $E \subseteq F^\circ$.

Summing up, $E = F^\circ$ and $F = F^\circ$, as desired.

\section{Proof of Theorem~\ref{slanted}}

$(a) \Rightarrow (b)$. First, we exclude the trivial case when $E = \{ o \}$ or $F = \{ o \}$. Indeed, suppose that $E = \{ o \}$. Choose a vector $u \in \R^n$. By the assumption, $u$ is uniquely expressible as $u = y + z$, where $y \in E$ and $z \in F$. Since $y = o$, we have $u = z \in F$. So, $F = \R^n$, as desired. Since the case $E = \{ o \}$ is similar, we may assume that neither $E$ nor $F$ is $\{ o \}$.

By the assumption, the vector $o$ is uniquely expressible as $o = c - c$, where $c \in E$ and $-c \in F$. Let $E_0 = E - c$ and $F_0 = F + c$. Then $o \in E_0 \cap F_0$.

We assert that the sets $E_0$ and $F_0$ satisfy condition $(a)$ of the theorem. Indeed, choose any vector $u \in \R^n$. Condition $(a)$ shows that $u = y' + z'$ for suitable vectors $y' \in E$ and $z' \in F$. Consequently, $u = (y' - c) + (z' + c)$, where $y' - c \in E_0$ and $z' + c \in F_0$. For the uniqueness of such a representation, let $u = y_1 + z_1$ and $u = y_2 + z_2$ for suitable vectors $y_1, y_2 \in E_0$ and $z_1, z_2 \in F_0$. Put $y_i' = y_i + c$ and $z_i' = z_i - c$, $i = 1, 2$. Clearly, $y_i' \in c + E_0 = E$ and $z_i' \in F_0 - c = F$, $i = 1, 2$. Since $u = y_1' + z_1' = y_2' + z_2'$, condition $(a)$ gives $y_1' = y_2'$ and $z_1' = z_2'$. Hence
\[
y_1 = y_1' - c = y_2' - c = y_2, \quad z_1 = z_1' + c = z_2' + c = z_2.
\]
Summing up, $E_0$ and $F_0$ satisfy condition $(a)$ of the theorem. In particular, $E_0 + F_0 = \R^n$.

Next, we observe that $E_0 \cap F_0 = \{ o \}$. Indeed, assume for a moment the existence of a nonzero vector $u \in E_0 \cap F_0$. Then $u = u + o = o + u$ are two distinct expressions of $u$ as sums of vectors from $E_0$ and $F_0$, respectively, in contradiction with condition $(a)$. Hence $E_0 \cap F_0 = \{ o \}$.

By a slight modification of the argument given in the proof of Theorem~\ref{orthogonal}, one can show that $E_0$ and $F_0$ are both cones. We divide our further argument into two cases: $o \in \Rint E_0 \cup \Rint F_0$ and $o \notin \Rint E_0 \cup \Rint F_0$. We are going to prove that the first case holds while the second one is impossible.

1. Assume first that $o \in \Rint E_0 \cup \Rint F_0$. We are going to show that under this assumption, $E_0$ and $F_0$ are complementary subspaces. Indeed, let, for instance, $o \in \Rint E_0$. Then the cone $E_0$ is a subspace, as follows from (P3).

We further contend that $o \in \Rint F_0$. Indeed, suppose that $o \notin \Rint F_0$. In this case, the equality $E_0 \cap F_0 = \{ o \}$ implies that $\Rint E_0 \cap \Rint F_0 = \varnothing$. Consequently, there is an $(n - 1)$-dimensional subspace $H \subseteq \R^n$ which contains the subspace $E_0$ and supports the convex set $F_0$ (see, e.\,g., \cite[Theorem~6.8]{sol15}). If $V \subseteq \R^n$ is a closed halfspace containing $F_0$ and determined by $H$, then $E_0 + F_0 \subseteq H + V = V$, contrary to $E_0 + F_0 = \R^n$. Hence $o \in \Rint F_0$.

As above, the inclusion $o \in \Rint F_0$ shows that the cone $F_0$ is a subspace. Consequently, $E_0$ and $F_0$ are complementary subspaces due to the properties $E_0 + F_0 = \R^n$ and $E_0 \cap F_0 = \{ o \}$.

2. Assume, for contradiction, that $o \notin \Rint E_0 \cup \Rint F_0$. Then $o \in \Rbd E_0 \cap \Rbd F_0$. Since $o \in -F_0$ and $\Rint (-F_0) = -\Rint F_0$, one has $o \in \Rbd (-F_0)$.

Under this condition, we contend that $E_0 \cap (-F_0) = \{ o \}$. Indeed, if $E_0 \cap (-F_0)$ contained a nonzero vector $u$, then $o = o + o$ and $o = u + (-u)$ would be distinct representations of $o$ as sums of vectors from $E_0$ and $F_0$, respectively.

Hence $E_0 \cap (-F_0) = \{ o \}$. Consequently, there is an $(n - 1)$-dimensional subspace $H \subseteq \R^n$ which separates $E_0$ and $-F_0$ (see, e.\,g., \cite[Theorem~6.30]{sol15}). If $V \subseteq \R^n$ is a closed halfspace determined by $H$ and containing $E_0$, then $-F_0 \subseteq -V$, implying the inclusion $F_0 \subseteq V$. Therefore, $E_0 + F_0 \subseteq V + V = V$, contrary to $E_0 + F_0 = \R^n$.

The obtained contradiction implies that the case $o \notin \Rint E_0 \cup \Rint F_0$ does not hold. Summing up, the sets $E_0$ and $F_0$ are complementary subspaces. Therefore, the sets $E = c + E_0$ and $F = F_0 - c$ are complementary planes.

$(b) \Rightarrow (a)$. Suppose that $E$ and $F$ are complementary planes. Then $E = c_1 + S_1$ and $F = c_2 + S_2$, where $c_1, c_2 \in \R^n$ and $S_1$ and $S_2$ are complementary subspaces. Given a vector $u \in \R^n$, there exist unique vectors $x_1 \in S_1$ and $x_2 \in S_2$ such that $u - c_1 - c_2 = x_1 + x_2$. Hence $u = y + z$, where $y = c_1 + x_1 \in E$ and $z = c_2 + x_2 \in F$ is the unique representation of $u$.

\bigskip

\noindent \textbf{Acknowledgment.} The author thanks the anonymous referee for many helpful suggestions essentially improving the original version of the manuscript.

\end{document}